\newif \ifreview \reviewtrue
\newcommand*\patchAmsMathEnvironmentForLineno[1]{%
    \expandafter\let\csname old#1\expandafter\endcsname\csname #1\endcsname
    \expandafter\let\csname oldend#1\expandafter\endcsname\csname end#1\endcsname
    \renewenvironment{#1}%
    {\linenomath\csname old#1\endcsname}%
    {\csname oldend#1\endcsname\endlinenomath}}% 
  \newcommand*\patchBothAmsMathEnvironmentsForLineno[1]{%
    \patchAmsMathEnvironmentForLineno{#1}%
    \patchAmsMathEnvironmentForLineno{#1*}}%
\crefname{equation}{}{}
\newtheorem{lemma}{Lemma}[section]
\newtheorem{theorem}[lemma]{Theorem}
\crefname{subsection}{Subsection}{Subsections}
\crefname{enumi}{item}{items}
\newcommand{\1}{\ensuremath{\mathbbm{1}}}
\providecommand{\N}{{\ensuremath{\mathbbm{N}}}}
\providecommand{\Z}{{\ensuremath{\mathbbm{Z}}}}
\providecommand{\R}{{\ensuremath{\mathbbm{R}}}}
\providecommand{\E}{{\ensuremath{\mathbbm{E}}}}
\newcommand{\Id}{\mathrm{Id}}
\newcommand{\xeqref}[1]{}
\newcommand{\tinynote}[1]{}
\renewcommand{\P}{{\ensuremath{\mathbbm{P}}}}
\renewcommand{\gets}{\curvearrowleft}
\newcommand{\F}{{\ensuremath{\mathbbm{F}}}}
\newcommand{\HS}{\mathrm{HS}}
\newcommand{\fctDelay}{\tau}
\renewcommand{\epsilon}{\varepsilon}
\newcommand{\size}[1]{\left\lvert#1\right\rvert}
\title{A path-dependent stochastic Gronwall inequality and strong convergence rate
for stochastic functional differential equations}
\author%[H. Hutzenthaler, A. Jentzen, T. Kruse, and T. A. Nguyen]
{Martin Hutzenthaler$^{1}$ \\
 Tuan Anh Nguyen$^{2}$\bigskip\\
\small{$^1$ Faculty of Mathematics, University of Duisburg-Essen,}\\
\small{Essen, Germany; e-mail: \texttt{martin.hutzenthaler}\textcircled{\texttt{a}}\texttt{uni-due.de}}\\
\small{$^2$ Faculty of Mathematics, University of Duisburg-Essen,}\\
\small{Essen, Germany; e-mail: \texttt{tuan.nguyen}\textcircled{\texttt{a}}\texttt{uni-due.de}}
}
\begin{document}
\maketitle
\begin{abstract}
 We derive a stochastic Gronwall lemma with suprema over the paths in the upper
 bound of the assumed affine-linear growth assumption. This allows applications
  to It\^o processes with coefficients which depend on earlier time points
  such as stochastic delay equations or Euler-type approximations
  of stochastic differential equations.
  We apply our stochastic Gronwall lemma with path-suprema to
  stochastic functional differential equations and prove
  a strong convergence rate for coefficient functions which depend
  on path-suprema.
\end{abstract}

{\makeatletter
\let\@makefnmark\relax
\let\@thefnmark\relax
\@footnotetext{\emph{Key words and phrases:}
stochastic Gronwall lemma, functional stochastic differential equations,
path-dependent stochastic differential equations,
stochastic delay equations
}
\@footnotetext{\emph{AMS 2010 subject classification}: 60E15, 65C30, 34K50} 
\makeatother}

\section{Introduction}
There are numerous applications of the classical (deterministic) Gronwall
lemma.
Scheutzow \cite{scheutzow2013stochastic} derived a powerful
stochastic version of the Gronwall lemma with an $L^p$-estimate with $p\in(0,1)$.
Makasu \cite{makasu2019stochastic} extended this to the case $p=1$.
Hudde et al.\ \cite{hudde2021stochastic} extended this to the case $p\in(1,\infty)$.
For related stochastic Gronwall lemmas see, e.g.,
\cite{renesse2010existence,zhang2018singular,kruse2018discrete,makasu2020extension,xie2020ergodicity,geiss2021sharp,geiss2022concave}.

Recently, Mehri and Scheutzow \cite{mehri2019stochastic} relaxed the affine-linear
growth assumption and allowed running path-suprema in the upper bound.
More precisely, Mehri and Scheutzow
\cite[Theorem 2.2]{mehri2019stochastic} in particularly
prove that if $\alpha\colon[0,\infty)\to[0,\infty)$ is measurable,
if $X,H\colon[0,\infty)\times\Omega\to[0,\infty)$
are adapted stochastic processes on a filtered probability space
$(\Omega,\mathcal{F},\P,(\mathbb{F}_t)_{t\in[0,\infty)})$ with
continuous sample paths and if 
%$M\colon[0,\infty)\times\Omega\to\R$ is a continuous local martingale
%with $M_0=0$ which satisfy that a.s.\ it holds for all $t\in[0,\infty)$ that
\begin{equation}  \begin{split}
  X_t\leq\int_0^t\alpha_s\sup_{r\in[0,s]}X_r\,ds +M_s+H_s,
\end{split}     \end{equation}
then it holds for all $p\in(0,1)$, $t\in[0,\infty)$ that
\begin{equation}  \begin{split}
  \E\Big[\sup_{s\in[0,t]}|X_s|^p\Big]\leq \tfrac{1}{(1-p)\cdot p^{1+p}}
  \E\Big[\sup_{s\in[0,t]}|H_s|^p\Big]
  \exp\Big(\tfrac{1}{(1-p)^{1/p}\cdot p}\int_0^t\alpha_s\,ds\Big).
\end{split}     \end{equation}
The main goal of this article is to complement this path-dependent stochastic
Gronwall inequality with a result in the case $p\in(1,\infty)$; 
see Theorem \ref{a02} below for the precise formulation.

The second goal of this article is to demonstrate an application 
of our stochastic Gronwall lemma.
Stochastic functional differential equations (SFDEs,
which are also denoted as stochastic delay equations or path-dependent SDEs in the
literature)
appear in a wide range of
applications;
see, e.g., \cite{banos2019stochastic,blath2016new,blath2020population,frank2001stationary,stoica2005stochastic,tian2007stochastic}. 
Some SFDEs can be transformed to classical stochastic differential equations
by the linear chain trick;
see, e.g., \cite{scheutzow2018stochastic}.
In general, however,
solutions are typically not known explicitly
and
the linear chain trick does not work.
We will prove that Euler-type approximations
of SFDEs
converge with strong rate
$0.5-$ if the drift coefficient is one-sided global Lipschitz continuous,
the diffusion coefficient is globally Lipschitz continuous and both coefficients
grow at most linearly with respect to the supremum-norm on path space.
We emphasize that our path-dependent stochastic Gronwall lemma allows
us to consider the global monotonicity condition \eqref{eq:one-sided}
jointly on $\mu$ and $\sigma$ with path-supremum on the right-hand side.
This was not possible before.
The following Theorem \ref{thm:intro} illustrates our main result in this direction.

\begin{theorem}\label{thm:intro}
Let $d\in\N$,
$T,\fctDelay,c \in [0, \infty)$,
$p\in[2,\infty)$,
$\mu\in C( [0,T]\times C([-\fctDelay,T], \R^d), \R^d )$,
$\sigma\in C( [0,T]\times C([-\fctDelay,T],\R^d),  \HS(\R^d,\R^d)) $, 
assume 
for all 
$t\in[0,T]$, $s\in[0,t]$,
$x,y\in C([-\fctDelay,T],\R^d)$
that 
$
 \lVert\mu(t,x)\rVert
 +\lVert\sigma(t,x)\rVert
 \leq 
c\sup_{s\in [-\fctDelay,t]}\left[a+\lVert x(s)\rVert_{H}^2\right]^{\frac{1}{2}}$,
that $\mu(t,\cdot)$, $\sigma(t,\cdot)$ only depend on the interval $[-\fctDelay,t]$,
that
\begin{equation}\begin{split}\label{eq:one-sided}
&
2\big\langle
 x(t)-y(t),
\mu(t,x)-\mu(t,y)
\big\rangle
+p
\left\lVert
\sigma(t,x)-\sigma(t,y)\right\rVert^2
\leq c\sup_{s\in[-\fctDelay,T]}
\lVert x(s)-y(s)\rVert^2,
\end{split}\end{equation}
and that
\begin{equation}\begin{split}\label{eq:temporal}
&\max
\Big\{\!
\left\lVert
\mu(s,x)-
\mu(t,x)\right\rVert^2
,
\left\lVert
\sigma(s,x)-
\sigma(t,x)\right\rVert^2
\Big\}
\leq c\Big[\lvert t-s\rvert+\!\!
\sup_{\substack{u,v\in[0,t]\colon\\ \lvert u-v\rvert\leq \lvert t-s\rvert}}
\left\lVert
x(u)-x(v)\right\rVert^2\Big],
\end{split}
\end{equation}
let $(\Omega,\mathcal{F},\P, (\F_t)_{t\in[0,T]})$ be a filtered probability space which satisfies the usual conditions,
%for every $r\in (0,\infty)$ and every random variable $\mathfrak{X}\colon \Omega\to\R$ let $\lVert \mathfrak{X}\rVert_r\in[0,\infty]$ satisfy that
%$\lVert \mathfrak{X}\rVert_r= (\E [ \lvert \mathfrak{X}\rvert^r] )^{1/r}$,
%%%%%%
let $W\colon [0,T]\times\Omega\to U$ be an
standard $(\mathbb{F}_s)_{s\in[0,T]}$-Wiener process,
let $\xi=(\xi_t(\omega))_{\omega\in\Omega,t\in[-\fctDelay,0]}\colon \Omega\to C([-\fctDelay,0],H) $ be $\mathbb{F}_0$-measurable,
assume that $\xi$ and $W$ are independent,
 let 
$X 
\colon [-\fctDelay,T]\times\Omega\to \R^d$ be adapted,
have continuous sample paths,
and satisfy
for all $r\in[-\fctDelay,0]$, $t\in (0,T]$ a.s.\ that 
\begin{align}\label{eq:SFDE}
X_{r} 
=
\xi_r\quad\text{and}\quad 
{X}_t = \xi_0
+  \int_{0}^{t}\mu(s,X )\,ds
+  \int_{0}^{t}\sigma(s,X )\,dW_s
,
\end{align}
and for every $n\in\N$ let
$Y^n,\mathcal{Y}^n
\colon [-\fctDelay,T]\times\Omega\to \R^d$ satisfy
assume for all $r\in[-\fctDelay,0]$, 
$k\in [0,n-1]\cap\Z$,
$t\in(\frac{kT}{n},\frac{(k+1)T}{n}]$ that
$\mathcal{Y}_r^n=\xi_r$,
$
\mathcal{Y}^{n}_t=
(k+1-\tfrac{nt}{T})
Y^{n}_{k}
+
(\tfrac{nt}{T}-k)
Y^{n}_{k+1}$
and
\begin{align}Y^n_0=\xi_0,\quad\text{and}\quad
Y_{k+1}^n= Y_k^n+\mu(\tfrac{kT}{n}, \mathcal{Y}^n )\tfrac{T}{n}+
\sigma(\tfrac{kT}{n}, \mathcal{Y}^n )(W_{\frac{(k+1)T}{n}}-W_{\frac{kT}{n}}).
\end{align}
 Then for every $q\in[1,p)$ there exists $C\in\R$ such that for all $n\in\N$ it holds that
\begin{align}\begin{split}
&
\Big(\E\Big[
\sup_{k\in[1,n]\cap\N}
\Big
\lVert
\mathcal{Y}^{n}_k-X_{\frac{kT}{n}}
\Big\rVert^q\Big]\Big)^{\frac{1}{q}}\leq 
Cn^{\frac{1}{2p}-\frac{1}{2}}
%\left(
%\size{\delta_1}
%\lceil T/\size{\delta_1}\rceil^{\frac{1}{p}}\right)^{\frac{q}{2}}
.\end{split}
\end{align}
\end{theorem}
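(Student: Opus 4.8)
The plan is to compare $X$ with the time\nobreakdash-continuous Euler scheme $\bar Y^n$ and to feed $\|X-\bar Y^n\|^p$ into a path\nobreakdash-dependent stochastic Gronwall inequality. Put $\lfloor s\rfloor_n:=\tfrac Tn\lfloor\tfrac{ns}{T}\rfloor$, $\bar Y^n_r:=\xi_r$ for $r\in[-\fctDelay,0]$ and $\bar Y^n_t:=\xi_0+\int_0^t\mu(\lfloor s\rfloor_n,\mathcal Y^n)\,ds+\int_0^t\sigma(\lfloor s\rfloor_n,\mathcal Y^n)\,dW_s$ for $t\in[0,T]$, so that $\bar Y^n_{kT/n}=Y^n_k=\mathcal Y^n_{kT/n}$. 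From the affine growth bound, an estimate of the form $\|\bar Y^n_t\|^2\le c_1+c_1\int_0^t(1+\sup_{r\le s}\|\bar Y^n_r\|^2)\,ds+(\text{local martingale})_t$, and Theorem \ref{a02} (used with an exponent $>1$), one gets $\sup_n\E[\sup_{t\in[-\fctDelay,T]}(\|X_t\|+\|\bar Y^n_t\|+\|\mathcal Y^n_t\|)^{2p}]<\infty$; then, by the Burkholder--Davis--Gundy inequality together with a covering of $[0,T]$ by about $n$ intervals of length $n^{-1}$, one obtains the modulus estimate $\E[(\sup_{|u-v|\le T/n}\|X_u-X_v\|+\sup_{|u-v|\le T/n}\|\bar Y^n_u-\bar Y^n_v\|)^{2p}]\le C n^{1-p}$. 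This last bound is what will produce the exponent $\tfrac1{2p}-\tfrac12$.

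\textbf{Error inequality.} Let $e_t:=X_t-\bar Y^n_t$, so $e\equiv0$ on $[-\fctDelay,0]$ and $de_t=(\mu(t,X)-\mu(\lfloor t\rfloor_n,\mathcal Y^n))\,dt+(\sigma(t,X)-\sigma(\lfloor t\rfloor_n,\mathcal Y^n))\,dW_t$ on $[0,T]$. It\^o's formula (and $\|\Phi^\top x\|\le\|x\|\,\|\Phi\|$) gives $d\|e_t\|^p\le\tfrac p2\|e_t\|^{p-2}(2\langle e_t,\mu(t,X)-\mu(\lfloor t\rfloor_n,\mathcal Y^n)\rangle+(p-1)\|\sigma(t,X)-\sigma(\lfloor t\rfloor_n,\mathcal Y^n)\|^2)\,dt+dM^{(0)}_t$ with $M^{(0)}$ a continuous local martingale. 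I would then insert the values $\mu(\lfloor t\rfloor_n,X)$, $\sigma(\lfloor t\rfloor_n,X)$: the time increments $\mu(\lfloor t\rfloor_n,X)-\mu(t,X)$, $\sigma(\lfloor t\rfloor_n,X)-\sigma(t,X)$ are handled by \eqref{eq:temporal} and the modulus estimate above; writing $(p-1)\|a+b\|^2\le p\|a\|^2+p(p-1)\|b\|^2$ frees the coefficient $p$ so that the matching\nobreakdash-argument part $2\langle e_{\lfloor t\rfloor_n},\mu(\lfloor t\rfloor_n,X)-\mu(\lfloor t\rfloor_n,\mathcal Y^n)\rangle+p\|\sigma(\lfloor t\rfloor_n,X)-\sigma(\lfloor t\rfloor_n,\mathcal Y^n)\|^2$ can be estimated by \eqref{eq:one-sided} --- whose supremum reduces to one over $[-\fctDelay,\lfloor t\rfloor_n]$ because $\mu(\lfloor t\rfloor_n,\cdot),\sigma(\lfloor t\rfloor_n,\cdot)$ depend only on that interval --- by $c\sup_{u\le\lfloor t\rfloor_n}\|X(u)-\mathcal Y^n(u)\|^2\le 2\sup_{u\le t}\|e_u\|^2+2(\sup_{|u-v|\le T/n}\|X_u-X_v\|)^2$, using that $\mathcal Y^n$ interpolates $\bar Y^n$ linearly between mesh points. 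The replacement $e_t\rightsquigarrow e_{\lfloor t\rfloor_n}$ leaves the discrepancy $e_t-e_{\lfloor t\rfloor_n}=\int_{\lfloor t\rfloor_n}^t(\mu(r,X)-\mu(\lfloor t\rfloor_n,\mathcal Y^n))\,dr+\int_{\lfloor t\rfloor_n}^t(\sigma(r,X)-\sigma(\lfloor t\rfloor_n,\mathcal Y^n))\,dW_r$: the first integral contributes a term of order $T/n$ times integrable factors, while the second, paired with the $\F_{\lfloor t\rfloor_n}$\nobreakdash-measurable vector $\mu(\lfloor t\rfloor_n,X)-\mu(\lfloor t\rfloor_n,\mathcal Y^n)$ and carrying the factor $\|e_t\|^{p-2}$ (split as $\|e_{\lfloor t\rfloor_n}\|^{p-2}$ plus a remainder), becomes by a stochastic Fubini argument a stochastic integral against $W$ plus a controlled remainder.

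\textbf{Stochastic Gronwall and conclusion.} Collecting terms yields $\|e_t\|^p\le\int_0^tc_2\sup_{r\le s}\|e_r\|^p\,ds+M_t+H_t$ with $M$ a continuous local martingale, $M_0=0$, and $H$ adapted, nondecreasing, gathering the discretisation remainders; using the bounds of the first paragraph together with Young's inequality, one gets $\E[H_T^{q/p}]\le C n^{q(\frac1{2p}-\frac12)}$ for each $q\in[1,p)$. Since $q/p\in(0,1)$, the path\nobreakdash-dependent stochastic Gronwall inequality \cite[Theorem 2.2]{mehri2019stochastic} (applied to $\|e\|^p$, with $c_2$ in the role of the factor $\alpha$ and $H$ in the role of the additive term) gives $\E[\sup_{t\le T}\|e_t\|^q]=\E[\sup_{t\le T}(\|e_t\|^p)^{q/p}]\le C'\E[\sup_{t\le T}H_t^{q/p}]e^{c_2T}\le C''n^{q(\frac1{2p}-\frac12)}$, and since $\mathcal Y^n_k=\bar Y^n_{kT/n}=X_{kT/n}-e_{kT/n}$, restricting to mesh points yields $(\E[\sup_{k\in[1,n]\cap\N}\|\mathcal Y^n_k-X_{kT/n}\|^q])^{1/q}\le(\E[\sup_{t\le T}\|e_t\|^q])^{1/q}\le(C'')^{1/q}n^{\frac1{2p}-\frac12}$, which is the claim.

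\textbf{Main difficulty.} The heart of the argument is the second paragraph: one must carry the prefactor $\|e_t\|^{p-2}$ through the diffusion splitting and through the replacement $e_t\rightsquigarrow e_{\lfloor t\rfloor_n}$ without squandering the coefficient $p$ of \eqref{eq:one-sided}, and one must verify that the pairing\nobreakdash-mismatch term genuinely splits as a local martingale (absorbed into $M$) plus remainders of the same order ($n^{1-p}$ in $2p$\nobreakdash-th mean, i.e.\ $n^{q(\frac1{2p}-\frac12)}$ after the Gronwall step) as the modulus\nobreakdash-of\nobreakdash-continuity term from the first paragraph --- so that the exponent $\tfrac1{2p}-\tfrac12$ is exactly attained and no term is worse. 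The a priori moment bound of order $2p$ for $X$ and $\bar Y^n$ (hence the integrability asked of $\xi$) enters precisely through this matching of orders.
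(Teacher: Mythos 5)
There is a genuine gap, and it sits exactly where you flag the ``main difficulty''. Your route forces the pairing vector in \eqref{eq:one-sided} to be $e_{\lfloor t\rfloor_n}$, so you must control $p\|e_t\|^{p-2}\langle e_t-e_{\lfloor t\rfloor_n},\,\mu(\lfloor t\rfloor_n,X)-\mu(\lfloor t\rfloor_n,\mathcal Y^n)\rangle$. Since $\mu$ is \emph{not} assumed Lipschitz, the factor $\mu(\lfloor t\rfloor_n,X)-\mu(\lfloor t\rfloor_n,\mathcal Y^n)$ is only $O(1)$ in moments, and the Brownian part of $e_t-e_{\lfloor t\rfloor_n}$ is only $O(n^{-1/2})$. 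After you peel off the $\F_{\lfloor t\rfloor_n}$-measurable martingale piece by stochastic Fubini (which can be made rigorous), the leftover is $(\|e_t\|^{p-2}-\|e_{\lfloor t\rfloor_n}\|^{p-2})\langle\int_{\lfloor t\rfloor_n}^t\Delta\sigma\,dW_r,\Delta\mu\rangle$. With only the a priori bounds at your disposal, for $p\ge 3$ one has $|\,\|e_t\|^{p-2}-\|e_{\lfloor t\rfloor_n}\|^{p-2}|\lesssim \|e_t-e_{\lfloor t\rfloor_n}\|\,(\sup\|e\|)^{p-3}=O(n^{-1/2})\cdot O(1)$, so this remainder is only $O(n^{-1})$ in $L^1$ (Young absorption against $\sup\|e\|^p$ gives $O(n^{-p/3})$), whereas your own accounting requires the additive term $H_T$ to be of order $n^{(1-p)/2}$; these agree only for $p\le 3$ (for $p\in[2,3]$ the H\"older bound $|\,\|a\|^{p-2}-\|b\|^{p-2}|\le\|a-b\|^{p-2}$ closes the gap exactly). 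Hence for $p>3$ the claimed exponent $\tfrac1{2p}-\tfrac12$ is not reached by the estimates you describe; you would need an additional idea, e.g.\ a bootstrap that feeds a preliminary rate for $\sup_t\|e_t\|$ back into the $\|e\|^{p-3}$ factor, and none is indicated.

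The paper avoids this entirely by never shifting the pairing vector or the pairing time. Instead it shifts the \emph{path} argument: the auxiliary process $\mathcal X^{1,t}$ of \eqref{b19} agrees with the Euler interpolation on $[-\tau,\delta_1(t)]$ (so by the causality assumption \eqref{b17} the scheme's coefficients $\mu(\delta_1(t),\cdot)$, $\sigma(\delta_1(t),\cdot)$ are unchanged) and takes the value $X^1_t$ at time $t$; therefore the monotonicity condition \eqref{b10} can be applied at time $t$ with pairing vector exactly $X^1_t-X^0_t$, which is precisely what It\^o's formula produces when \cref{a02} is invoked for $V(t,x)=\|x\|_H^2$ with exponent $p/2$. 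The only remainders are then time-shift terms and squared moduli of continuity, collected in $\Gamma$ of \eqref{b15}, and no cross terms between Brownian increments and the (non-Lipschitz) drift difference, and no stochastic Fubini argument, ever appear. Your first and third paragraphs (a priori moments, modulus estimates, the final application of the path-dependent Gronwall bound with exponent $q/p<1$, and the restriction to mesh points) do parallel the paper; the decisive difference is this frozen-path device, and without it or a substitute your argument does not prove the theorem for all $p\in[2,\infty)$.
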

Theorem \ref{thm:intro} follows immediately from Theorem \ref{a05}.
Next we discuss the assumptions of
Theorem \ref{thm:intro}.
The coefficients of the SFDE \eqref{eq:SFDE}
are assumed to depend only on the path up to the current time point
to ensure that the solution process is progressively measurable.
The condition \eqref{eq:temporal} is satisfied for example
for the running path-supremum $[0,T]\times C([-\fctDelay,T],\R^d)\ni (t,x)\mapsto
\sup_{s\in[-\fctDelay,t]}x_s$.
Moreover, the coefficients are assumed to satisfy the global
monotonicity condition \eqref{eq:one-sided}.
They do not need to be globally Lipschitz continuous.
However, for convenience we assume that $\mu$ and $\sigma$ grow
at most linearly. Otherwise the Euler-Maruyama approximations
typically diverge in the strong sense; see \cite{hjk11,HJK13}.

We selectively mention results from
An incomplete selection of approximation results
in
the huge literature
on SFDEs is
\cite{KP00,BB00,Mao03,mao2003numerical,BB05,higham2007almost,KS14,Akh19,renesse2010existence,mehri2019stochastic,WM08,lan2018strong,GMY18,lan2018strong}.
%KCR06
A closely related result is Wu and Mao \cite[Theorem 5.1]{WM08} which establishes
$L^2$-rate $0.5-$ if the coefficients functions are globally Lipschitz continuous
with respect to the path-supremum. If the diffusion coefficient
is globally Lipschitz continuous, then our stochastic Gronwall lemma is not
needed and one can apply the Burkholder-Davis-Gundy inequality to
the diffusion part.
Another closely related result is Mehri and Scheutzow
\cite[Theorem~3.2]{mehri2019stochastic}
which proves that Euler approximations converge in probability if
a local version of the monotonicity condition \eqref{eq:one-sided} holds.

\section{A path-dependent stochastic Gronwall inequality}

The following Theorem \ref{a02} is the main result of this article
and establishes a path-dependent stochastic Gronwall inequality.
The function $V$ is typically chosen as
and the squared norm
$V=\big([0,T]\times O\ni (t,x)\mapsto \|x\|_H^2\in[0,\infty)$
and then the Lyapunov-type condition \eqref{a01} becomes a one-sided
linear growth condition.
%%%%
\begin{theorem}\label{a02}Let
$( H, \left< \cdot , \cdot \right>_H, \left\| \cdot \right\|_H )$
and
$( U, \left< \cdot , \cdot \right>_U, \left\| \cdot \right\|_U )$
be
separable
$\R$-Hilbert spaces,
let $O\subseteq H$ be an open set, 
let 
$T \in (0, \infty)$,
 $p\in[1,\infty)$,
 $ V \in C^{ 1,2 }( [0,T]\times O, [0,\infty) )$,
let
$\alpha,\lambda \colon [0,T]  \to [0,\infty)$ be measurable,
let $(\Omega,\mathcal{F},\P, (\F_t)_{t\in[0,T]})$ be a filtered probability space which satisfies the usual conditions,
%%%%%%
let $(W_t)_{t \in [0, T]}$ be an
$\Id_U$-cylindrical $(\mathbb{F}_t)_{t\in[0,T]}$-Wiener process,
let $X\colon[0,T]\times\Omega\to O$,
$\beta,\gamma \colon [0,T] \times \Omega \to [0,\infty)$ be adapted,
let
$a\colon[0,T]\times\Omega\to H$ be measurable,
let
$b\colon[0,T]\times\Omega\to \HS(U,H)$
be 
progressively measurable,
assume that $X$ has continuous sample paths,
assume that for all $t\in[0,T]$ it holds a.s.\ that $\int_0^{T}
(\lVert a_s\rVert_H+\lVert b_s\rVert_{\HS(U,H)}^2)\,ds<\infty$ and
\begin{align}\label{a03}
 X_{t}=X_0+\int_0^t a_s\,ds +\int_0^tb_s\,dW_s,
\end{align}
and
assume that a.s.\ it holds for all $s\in[0,T]$ that
\begin{equation}  \begin{split}\label{a01}
&\left(\frac{\partial}{\partial s}V\right)(s,X_s)
 +
\left(\frac{\partial}{\partial x}V\right)(s,X_s)\,
a_s   +\frac{1}{2}\textup{trace}\Bigl( b_sb_s^{*}\,
    (\textup{Hess}_xV)(s,X_s)
    \Bigr)    \\
&\quad 
+\frac{p-1}{2}\frac{\left\lVert
b_s^*
\left(\nabla_xV\right)(s,X_s)\right\rVert_{U}^2}
    {V(s,X_s)}
    \leq \alpha_s\left[\sup_{r\in[0,s]}V(r,X_r)\right]+
\beta_s\lambda_s+\gamma_s.
\end{split}     \end{equation}
Then for all
$q\in (0,p)$ it holds that
\begin{align}\begin{split}&
\E\!\left[
\left(\sup_{r\in[0,T]}V(r,X_r)\right)^{q}\right]\\
&
\leq 
\E\!\left[\left(\left(V(0,X_0)\right)^{p}+
\int_{0}^{T}(
\beta_s^p\lambda_s+\gamma_s^p)\,ds\right)^{\frac{q}{p}}\right]
\frac{
\exp \left(\frac{\int_{0}^{T}
\bigl( p\alpha_s+(p-1)(\lambda_s+1)\bigr)ds}{\frac{q}{p}(1- \frac{q}{p})^{\frac{p}{q}}}\right)}{\left(\frac{q}{p}\right)^{\frac{q}{p}+1}(1-\frac{q}{p})}
.\end{split}\label{a01b}
\end{align}
 \end{theorem}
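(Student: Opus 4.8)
The plan is to reduce Theorem~\ref{a02} to the path-dependent stochastic Gronwall inequality of Mehri and Scheutzow \cite[Theorem~2.2]{mehri2019stochastic} recalled in the introduction, which covers exponents in $(0,1)$. The bridge is It\^o's formula applied to the real-valued process $s\mapsto V(s,X_s)^{p}$: the correction term $\tfrac{p-1}{2}\lVert b_s^{*}(\nabla_{x}V)(s,X_s)\rVert_{U}^{2}/V(s,X_s)$ in \eqref{a01} is exactly what makes the drift of $V(s,X_s)^{p}$ controllable by the right-hand side of \eqref{a01}, and since we only ask for the $L^{q}$-norm of $\sup_{r}V(r,X_r)$ with $q<p$, what remains is an $L^{q/p}$-estimate with $q/p\in(0,1)$ for the nonnegative process $(V(s,X_s)^{p})_{s\in[0,T]}$. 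First I would dispose of the cases in which the right-hand side of \eqref{a01b} is $+\infty$: if $\int_0^{T}(p\alpha_s+(p-1)(\lambda_s+1))\,ds=\infty$, or $\int_0^{T}(\beta_s^{p}\lambda_s+\gamma_s^{p})\,ds=\infty$ with positive probability, or $\E[((V(0,X_0))^{p}+\int_0^{T}(\beta_s^{p}\lambda_s+\gamma_s^{p})\,ds)^{q/p}]=\infty$, then there is nothing to prove, so assume all three are finite. Then, to avoid the non-smoothness of $v\mapsto v^{p}$ at the origin (relevant when $p\in[1,2)$) and any degeneracy of the quotient in \eqref{a01}, I would replace $V$ by $V_{\epsilon}:=V+\epsilon\in C^{1,2}([0,T]\times O,[\epsilon,\infty))$ for $\epsilon\in(0,\infty)$: since $\nabla_{x}V_{\epsilon}=\nabla_{x}V$, $\textup{Hess}_{x}V_{\epsilon}=\textup{Hess}_{x}V$, $\tfrac{\partial}{\partial s}V_{\epsilon}=\tfrac{\partial}{\partial s}V$ and $V_{\epsilon}\geq V$, the hypothesis \eqref{a01} continues to hold for $V_{\epsilon}$ with the same $\alpha,\beta,\gamma,\lambda$ (the only affected term, the quotient, only decreases), and $\sup_{r\in[0,T]}V_{\epsilon}(r,X_r)=\epsilon+\sup_{r\in[0,T]}V(r,X_r)$ decreases to $\sup_{r\in[0,T]}V(r,X_r)$ as $\epsilon\downarrow0$, with $q$-th moments dominated for $\epsilon\in(0,1]$ by the (finite) right-hand side of \eqref{a01b} at $\epsilon=1$. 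Hence it suffices to prove \eqref{a01b} for $V_{\epsilon}$, now bounded below by $\epsilon$, and let $\epsilon\downarrow0$ by dominated convergence.

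For fixed $\epsilon$ I would apply It\^o's formula to $f(s,x):=V_{\epsilon}(s,x)^{p}$, which lies in $C^{1,2}([0,T]\times O,[\epsilon^{p},\infty))$, along $X$ (the necessary integrability being provided by \eqref{a03}, the standing assumption $\int_0^{T}(\lVert a_s\rVert_H+\lVert b_s\rVert_{\HS(U,H)}^{2})\,ds<\infty$ a.s., and the continuity of $V_{\epsilon}$ and its first two spatial derivatives along the continuous paths of $X$). Using $\textup{Hess}_{x}f=pV_{\epsilon}^{p-1}\,\textup{Hess}_{x}V_{\epsilon}+p(p-1)V_{\epsilon}^{p-2}(\nabla_{x}V_{\epsilon})\otimes(\nabla_{x}V_{\epsilon})$ and $\textup{trace}(b_sb_s^{*}(\nabla_{x}V_{\epsilon})(s,X_s)\otimes(\nabla_{x}V_{\epsilon})(s,X_s))=\lVert b_s^{*}(\nabla_{x}V_{\epsilon})(s,X_s)\rVert_{U}^{2}$, the $ds$-drift of $f(s,X_s)$ equals $pV_{\epsilon}(s,X_s)^{p-1}$ times precisely the left-hand side of \eqref{a01} for $V_{\epsilon}$, hence is bounded above by $pV_{\epsilon}(s,X_s)^{p-1}[\alpha_s\sup_{r\in[0,s]}V_{\epsilon}(r,X_r)+\beta_s\lambda_s+\gamma_s]$, while $M_t:=\int_0^{t}pV_{\epsilon}(s,X_s)^{p-1}(\nabla_{x}V_{\epsilon})(s,X_s)^{*}b_s\,dW_s$ is a continuous local martingale with $M_0=0$ (its integrand is pathwise square-integrable because $V_{\epsilon}(\cdot,X_\cdot)$ and $(\nabla_{x}V_{\epsilon})(\cdot,X_\cdot)$ are continuous and $\int_0^{T}\lVert b_s\rVert_{\HS(U,H)}^{2}\,ds<\infty$ a.s.). Abbreviating $S_s:=\sup_{r\in[0,s]}V_{\epsilon}(r,X_r)$, I would bound $pV_{\epsilon}(s,X_s)^{p-1}\alpha_sS_s\leq p\alpha_sS_s^{p}$ via $V_{\epsilon}(s,X_s)\leq S_s$, and bound the remaining two summands by suitably weighted Young inequalities so that $pV_{\epsilon}(s,X_s)^{p-1}\beta_s\lambda_s\leq(p-1)\lambda_sV_{\epsilon}(s,X_s)^{p}+\lambda_s\beta_s^{p}$ and $pV_{\epsilon}(s,X_s)^{p-1}\gamma_s\leq(p-1)V_{\epsilon}(s,X_s)^{p}+\gamma_s^{p}$ (these being trivial equalities when $p=1$). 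Since $S_s^{p}=\sup_{r\in[0,s]}V_{\epsilon}(r,X_r)^{p}$, integrating the It\^o decomposition then yields, for all $t\in[0,T]$ a.s.,
\begin{equation*}
V_{\epsilon}(t,X_t)^{p}\leq H_t+M_t+\int_0^{t}\bigl(p\alpha_s+(p-1)(\lambda_s+1)\bigr)\Bigl(\sup_{r\in[0,s]}V_{\epsilon}(r,X_r)^{p}\Bigr)\,ds,
\end{equation*}
where $H_t:=V_{\epsilon}(0,X_0)^{p}+\int_0^{t}(\beta_s^{p}\lambda_s+\gamma_s^{p})\,ds$ is a nonnegative, adapted, continuous, nondecreasing process.

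This inequality is exactly of the form required by \cite[Theorem~2.2]{mehri2019stochastic} for the nonnegative adapted continuous process $(V_{\epsilon}(s,X_s)^{p})_{s\in[0,T]}$, the process $H$ (for which $\sup_{r\in[0,T]}H_r=H_T$), the measurable coefficient $s\mapsto p\alpha_s+(p-1)(\lambda_s+1)$ in the role of their $\alpha$, and the exponent $q/p\in(0,1)$; it therefore delivers exactly \eqref{a01b} with $V$ replaced by $V_{\epsilon}$ --- the prefactor $[(q/p)^{q/p+1}(1-q/p)]^{-1}$ and the exponent $[\tfrac{q}{p}(1-\tfrac{q}{p})^{p/q}]^{-1}\int_0^{T}(p\alpha_s+(p-1)(\lambda_s+1))\,ds$ are precisely those in \eqref{a01b}, and $\E[H_T^{q/p}]=\E[((V(0,X_0)+\epsilon)^{p}+\int_0^{T}(\beta_s^{p}\lambda_s+\gamma_s^{p})\,ds)^{q/p}]$. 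Letting $\epsilon\downarrow0$ and using dominated convergence on both sides completes the proof. The genuinely hard analytic step --- controlling $\sup_{t\in[0,T]}M_t$ against the right-hand side in the absence of a priori integrability of $\sup_{t}V(t,X_t)$ --- is fully outsourced to the cited Mehri--Scheutzow theorem; the only real obstacle on our side is to carry out the It\^o computation (which is what surfaces the correction term of \eqref{a01}) and to arrange the Young-inequality bookkeeping so that the coefficient of $\sup_{r\in[0,s]}V_{\epsilon}(r,X_r)^{p}$ comes out as $p\alpha_s+(p-1)(\lambda_s+1)$ and the forcing term as $\beta_s^{p}\lambda_s+\gamma_s^{p}$, which is what makes the constant and exponent in \eqref{a01b} match on the nose.
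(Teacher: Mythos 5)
Your proposal is correct and follows essentially the same route as the paper's proof: perturb $V$ by $\epsilon$, apply It\^o's formula to $(\epsilon+V(s,X_s))^p$, use Young's inequality to produce the coefficient $p\alpha_s+(p-1)(\lambda_s+1)$ and forcing term $\beta_s^p\lambda_s+\gamma_s^p$, invoke \cite[Theorem~2.2]{mehri2019stochastic} with exponent $q/p\in(0,1)$, and let $\epsilon\downarrow 0$ by dominated convergence under finiteness of the right-hand side. No substantive difference from the paper's argument.
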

\begin{proof}[Proof of \cref{a02}]
W.l.o.g.\ 
we assume that the right-hand side of \eqref{a01b} is finite, otherwise the assertion is trivial.
First,
\eqref{a01}, the fact that
$V,\alpha,\beta, \gamma,\lambda \geq 0$,
 and the fact that
$
\forall\,A,B\in (0,\infty)\colon 
 A^{1-\frac{1}{p}} B^{\frac{1}{p}}\leq (1-\tfrac{1}{p}) A+\tfrac{1}{p}B
$ 
 yield that  a.s.\ it holds for all
$s\in[0,T]$,
$\epsilon\in (0,1)$
 that
\begin{align} 
&p\left(\epsilon+V(s,X_s)\right)^{p-1}\left(\frac{\partial}{\partial s}V\right)(s,X_s)+
p\left(\epsilon+V(s,X_s)\right)^{p-1}\left(\frac{\partial}{\partial x}V\right)(s,X_s)a_s\nonumber
\\&\quad
+p\left(\epsilon+V(s,X_s)\right)^{p-1}\frac{1}{2}\textup{trace}\left(b_s^{*}(\textup{Hess}_xV)(s,X_s)b_s\right)\nonumber
\\&\quad+\frac{1}{2}p(p-1) \left(\epsilon+V(s,X_s)\right)^{p-2}
\left\lVert
b_s^*
\left(\nabla_xV\right)(s,X_s)\right\rVert_{U}^2\nonumber
\\&
=  p\left(\epsilon+V(s,X_s)\right)^{p-1}  \Biggl[
\left(\frac{\partial}{\partial s}V\right)(s,X_s)
     +
     \left(\frac{\partial}{\partial x}V\right)(s,X_s)a_s\nonumber
     \\&\quad
     +\frac{1}{2}\textup{trace}\left(b_sb_s^{*}(\textup{Hess}_xV)(s,X_s)\right)
     +\frac{p-1}{2}\frac{
     \left\lVert
b_s^*
\left(\nabla_xV\right)(s,X_s)\right\rVert_{U}^2
     }{\epsilon+V(s,X_s)}
     \Biggr]\nonumber
\\&
\leq   p\left(\epsilon+\sup_{r\in[0,s]}V(r,X_r)\right)^{p-1}\xeqref{a01}\left[
      \alpha_s \left(\epsilon+\sup_{r\in[0,s]}V(r,X_r)\right)+  
\beta_s\lambda_s+\gamma_s
\right]\nonumber\\
&= 
   p\alpha_s\left(\epsilon+\sup_{r\in[0,s]}V(s,X_s)\right)^{p}+p \left[\left(\epsilon+\sup_{r\in[0,s]}V(r,X_r)\right)^{p}\right]^{1-\frac{1}{p}}\left[
(\beta_s^p)^{\frac{1}{p}}\lambda_s+
(\gamma_s^p)^{\frac{1}{p}}\right]\nonumber
\\
&\leq\tinynote{Young} p\alpha_s\left(\epsilon+\sup_{r\in[0,s]}V(r,X_r)\right)^{p}+p\left[
\left(1-\frac{1}{p}\right)
\left(\epsilon+\sup_{r\in[0,s]}V(r,X_r)\right)^{p}+
\frac{1}{p}\beta_s^p
\right]\lambda_s\nonumber\\
&\quad+
p\left[
\left(1-\frac{1}{p}\right)
\left(\epsilon+\sup_{r\in[0,s]}V(r,X_r)\right)^{p}+
\frac{1}{p}\gamma_s^p
\right]\nonumber
\\
&=\Bigl( p\alpha_s+(p-1)(\lambda_s+1)\Bigr)\left(\epsilon+\sup_{r\in[0,s]}V(r,X_r)\right)^{p}+
\beta_s^p\lambda_s+\gamma_s^p
.
  \label{a07}  \end{align}
This,
It\^o's formula, the fact that $ V \in C^{ 1,2 }( [0,T]\times O, [0,\infty) )$, and  \eqref{a03} show that for all $\epsilon\in (0,1)$, $t\in[0,T]$ it holds a.s.\ that
\begin{equation}  \begin{split}
&(\epsilon+V( t,X_{ t}))^p
=\left(\epsilon+V(0,X_0)\right)^{p}
+\int_0^tp\left(\epsilon+V(s,X_s)\right)^{p-1}\left(\frac{\partial}{\partial x}V\right)(s,X_s)b_s
\,dW_s
\\&\quad
+\int_0^{ t}\Biggl[ p\left(\epsilon+V(s,X_s)\right)^{p-1}\left(\frac{\partial}{\partial s}V\right)(s,X_s)+
p\left(\epsilon+V(s,X_s)\right)^{p-1}\left(\frac{\partial}{\partial x}V\right)(s,X_s)a_s\\&\qquad\qquad\qquad+p\left(\epsilon+V(s,X_s)\right)^{p-1}\frac{1}{2}\textup{trace}\left(b_s^{*}\left(\textup{Hess}_xV\right)(s,X_s)b_s\right) 
\\&\qquad\qquad\qquad+
\frac{1}{2}p(p-1) \left(\epsilon+V(s,X_s)\right)^{p-2}
\left\lVert
b_s^*
\left(\nabla_xV\right)(s,X_s)\right\rVert_{U}^2
\Biggr]
  \,ds\\
&\leq \left(\epsilon+V(0,X_0)\right)^{p}
+\int_0^t p\left(\epsilon+V(s,X_s)\right)^{p-1}\left(\frac{\partial}{\partial x}V\right)(s,X_s)b_s
\,dW_s\\
&\quad +\int_{0}^{t}\xeqref{a07}\left[
\Bigl( p\alpha_s+(p-1)(\lambda_s+1)\Bigr)\left(\epsilon+\sup_{r\in[0,s]}V(r,X_r)\right)^{p}+
\beta_s^p\lambda_s+\gamma_s^p\right]ds.
\end{split}  \label{eq:integrating.factor2}   \end{equation}
This, \cite[Theorem 2.2]{mehri2019stochastic} (applied for every 
$\epsilon\in (0,1)$,
 $q\in (0,p)$ with 
$
 X \gets\bigl((\epsilon+V(  \min \{t,T\},X_{ \min \{t,T\}}))^p \bigr)_{t\in[0,\infty)}$,
$ A\gets\bigl( \int_{0}^{\min \{t,T\}}
\bigl( p\alpha_s+(p-1)(\lambda_s+1)\bigr)\,ds\bigr)_{t\in[0,\infty)} $,
$ M\gets 
\bigl(\int_0^{t\wedge T} p(\epsilon+V(s,X_s))^{p-1}(\frac{\partial}{\partial x}V)(s,X_s)b_s
\,dW_s\bigr)_{t\in[0,\infty)} ,$
$H\gets
\bigl( (\epsilon+V(0,X_0))^{p}+\int_{0}^{t\wedge T}
(\beta_s^p\lambda_s+\gamma_s^p)\,ds\bigr)_{t\in [0,\infty)},$
$  p\gets \frac{q}{p} $ in the notation of \cite[Theorem 2.2]{mehri2019stochastic}),
the measurability and
regularity assumptions of $X$ and $V$, the fact that 
$p\geq 1$,
and nonnegativity of $\alpha,\lambda$, $\epsilon\in (0,1)$ show for all
$q\in (0,p)$ that
\begin{align}\small\begin{split}
&
\E\!\left[
\left(\epsilon+\sup_{r\in[0,T]}V(r,X_r)\right)^{q}\right]\\
&
\leq 
\E\!\left[\left(\left(\epsilon+V(0,X_0)\right)^{p}+
\int_{0}^{T}
\beta_s^p\lambda_s+\gamma_s^p\,ds\right)^{\frac{q}{p}}\right]
\tfrac{
\exp \left(\frac{\int_{0}^{T}
\bigl( p\alpha_s+(p-1)(\lambda_s+1)\bigr)ds}{\frac{q}{p}(1- \frac{q}{p})^{\frac{p}{q}}}\right)}{\left(\frac{q}{p}\right)^{\frac{q}{p}+1}(1-\frac{q}{p})}
.\end{split}
\end{align}
This, the 
dominated convergence theorem, and finiteness of the right-hand side of \eqref{a01b}
complete the proof of \cref{a02}.
\end{proof}

\section{Strong convergence rate for SFDEs}

The following Theorem \ref{a05} is our main result on strong convergence rates
for SFDEs.
We note that if $\sigma$ is globally Lipschitz continuous, then
we may choose $p$ arbitrarily large and then we obtain rate $0.5-$.
We also note that all upper bounds are explicit and thus allow us
to control dependencies, e.g., on the dimension to see which high-dimensional
SFDEs can be approximated without curse of dimensionality.

\begin{theorem}\label{a05}
Let
$( H, \left< \cdot , \cdot \right>_H, \left\| \cdot \right\|_H )$
and
$( U, \left< \cdot , \cdot \right>_U, \left\| \cdot \right\|_U )$
be
separable
$\R$-Hilbert spaces,
let 
$T \in (0, \infty)$,
$\tau\in [0,\infty)$,
$c,a\in [1,\infty)$,
$\epsilon\in (0,1]$,
$\beta \in [0,\infty)$, 
$p\in[2,\infty)$,
$\mu\in C( [0,T]\times C([-\tau,T], H), H )$,
$\sigma\in C( [0,T]\times C([-\tau,T],H),  \HS(U,H)) $, let $n\in\N$,
 $t_0,t_1,\ldots,t_n\in[0,T]$
satisfy for all $t\in [0,T] $
that $
0=t_0<t_1<\ldots<t_n=T$,
assume 
for all 
$t\in[0,T]$,
$\underline{t}\in[0,t]$,
$x_1,x_0\in C([-\fctDelay,T],H)$
that
\begin{align}
\bigl[
\forall\,s\in[-\fctDelay,t]\colon  x_1(s)=x_0(s)\bigr]\Longrightarrow
\bigl[
(\mu(t,x_1)= \mu(t,x_0))
\text{ and }
(\sigma(t,x_1)= \sigma(t,x_0))\bigr],\label{b17}
\end{align}
\begin{align}
 \lVert\mu(t,x_1)\rVert_H\leq 
c\sup_{s\in [-\fctDelay,t]}\left[a+\lVert x_1(s)\rVert_{H}^2\right]^{\frac{1}{2}}
,\quad 
 \left\lVert\sigma(t,x_1)\right\rVert_{\HS(U,H)}^2
\leq c\sup_{s\in [-\fctDelay,t]}\left[a+\lVert x_1(s)\rVert_{H}^2\right],
\label{b01}\end{align}
%\begin{align}
% \left\lVert\sigma(t,x)\right\rVert_{\HS(U,H)}
%\leq  c\sup_{s\in [-\fctDelay,t]}\left[a+\lVert x(s)\rVert_{H}^2\right]^{\frac{1}{2}},
%\label{b01b}\end{align}

\begin{equation}\begin{split}
&
2\left\langle
\mu(t,x_1)-\mu(t,x_0), x_1(t)-x_0(t)
\right\rangle_H
+(p-1)(1+\epsilon)
\left\lVert
\sigma(t,x_1)-\sigma(t,x_0)\right\rVert_{\HS(U,H)}^2\\
&\leq c\left[\sup_{s\in[-\fctDelay,T]}
\lVert x_1(s)-x_0(s)\rVert_H^2\right],
\end{split}\label{b10}\end{equation}

\begin{equation}\begin{split}
&\max\left\{
\left\lVert
\mu(t,x_1)-
\mu(s,x_1)\right\rVert_H^2
,\left\lVert
\sigma(t,x_1)-
\sigma(s,x_1)\right\rVert_H^2
\right\}\\
&\leq c\left[\lvert t-s\rvert+
\sup_{u,v\in[0,t]\colon \lvert u-v\rvert\leq \lvert t-s\rvert}
\left\lVert
x_1(u)-x_1(v)\right\rVert_H^2\right]\left[\sup_{s\in[-\fctDelay,t]}\left(a+\lVert x_1(s)\rVert^2\right)^{\beta}\right],\end{split}\label{b11}
\end{equation}
let $(\Omega,\mathcal{F},\P, (\F_t)_{t\in[0,T]})$ be a filtered probability space which satisfies the usual conditions,
%for every $r\in (0,\infty)$ and every random variable $\mathfrak{X}\colon \Omega\to\R$ let $\lVert \mathfrak{X}\rVert_r\in[0,\infty]$ satisfy that
%$\lVert \mathfrak{X}\rVert_r= (\E [ \lvert \mathfrak{X}\rvert^r] )^{1/r}$,
%%%%%%
let $W=(W_s)_{s \in [0, T]}\colon [0,T]\times\Omega\to U$ be an
$\Id_U$-cylindrical $(\mathbb{F}_t)_{t\in[0,T]}$-Wiener process,
let $\xi=(\xi_t(\omega))_{\omega\in\Omega,t\in[-\fctDelay,0]}\colon \Omega\to C([-\fctDelay,0],H) $ be 
$\F_0$-measurable, assume that $\xi$ and $W$ are independent,
%assume that
%\begin{align}
%\E\!\left[ \sup_{r\in[-\fctDelay,0]}\left\lVert\xi_r\right\rVert_H^{2p\max\{1,\beta\}}
%\right] <\infty,\label{k04}
%\end{align}
%
 let 
$X^1, \mathcal{X}^1, X^0
\colon [-\fctDelay,T]\times\Omega\to H$ have continuous sample paths,
assume
that $(X^0_s)_{s\in[0,T]}$ is adapted,
assume that for all $r\in[-\fctDelay,0]$, $t\in (0,T]$   it holds a.s.\ that 
\begin{align}
X_{r}^0
=
\xi_r\quad\text{and}\quad 
{X}_t^0= \xi_0
+  \int_{0}^{t}\mu(s,X^0)\,ds
+  \int_{0}^{t}\sigma(s,X^0)\,dW_s
,\label{b09}
\end{align}
 and assume for all $r\in[-\fctDelay,0]$, 
$k\in [0,n-1]\cap\Z$,
$t\in(t_k,t_{k+1}]$ that
\begin{align}\label{b07}
%\mathcal{X}^{\delta}_{\delta(u)}= X^{\delta}_{\delta(u)},
\mathcal{X}^{1}_t=
\frac{t_{k+1}-t}{t_{k+1}-t_k}
{X}^{1}_{t_k}
+
\frac{t-t_k}{t_{k+1}-t_k}
{X}^{1}_{t_{k+1}},
\end{align}
\begin{align}
X_{r}^1=\mathcal{X}_{r}^1
=
\xi_r,
\quad\text{and}\quad
{X}^{1}_t= X_{t_k}^1
+  \mu(t_k,\mathcal{X}^{1})(t-t_k)
+  \sigma(t_k,\mathcal{X}^{1})(W_t-W_{t_k}).\label{b05b}
\end{align}
Then
\begin{enumerate}[i)]
\item \label{b06b}for all 
$i\in \{0,1\}$, $q\in [2,\infty)$ it holds that
\begin{align}
&
\E\!\left[
\left(
\sup_{s\in [-\fctDelay,T]}\left[a+
\left\lVert
X^{i}_{s}\right\rVert_H^2\right]
\right)^{\frac{q}{2}}\right]\leq  7 e^{38 Tq^2c}
\E\!\left[\left(
a+\sup_{r\in [-\tau,0]}\lVert\xi_r\rVert_H^2\right)^{\frac{q}{2}}\right],
\end{align}

\item  \label{b12b}

for all 
 $\underline{u}\in [0,T]$,
$\overline{u}\in[\underline{u},T] $,
 $i\in\{0,1\}$, $q\in [2,\infty)$  it holds that
\begin{align}
\left\lVert
 \sup_{\underline{s},\overline{s}\in \left[\underline{u},\overline{u}\right]}
\left\lVert
X^i_{\underline{s}}-X^i_{\overline{s}}
\right\rVert_H
\right\rVert_{L^{q} (\P;\R) }
\leq  7cq e^{39 Tqc}\left(
\E\!\left[
\left(a+\sup_{r\in [-\tau,0]}\lVert\xi_r\rVert_H^2\right)^{\frac{q}{2}}\right]\right)^{\frac{1}{q}}\lvert \overline{u}-\underline{u}\rvert^{\frac{1}{2}},
\end{align} and
\item \label{k18b}for all  $q\in[1,p)$ it holds that
\begin{align}\begin{split}
&
\E\!\left[\left(
\sup_{s\in[0,T]}
\left
\lVert
X^{1}_s-X^{0}_s
\right\rVert_H^2\right)^{\frac{q}{2}}\right]\leq 
\tfrac{
\exp\! \left(\frac{ T p (c+\epsilon)}{\frac{q}{p}(1- \frac{q}{p})^{\frac{p}{q}}}\right)
}{\left(\frac{q}{p}\right)^{\frac{q}{p}+1}(1-\frac{q}{p})}T^{\frac{q}{p}}
\E\!\left[
\left(a+\sup_{r\in [-\tau,0]}\lVert\xi_r\rVert_H^2\right)^{q\max\{\beta,1\} }\right]\\
&\qquad\cdot 
\Biggl\{  \left[
c+\epsilon+\tfrac{\epsilon p-\epsilon+p}{\epsilon}c\right]202300 c^2 p^2 e^{230 T p c \max\{\beta ^2,1\}}
\size{\delta_1}
\lceil T/\size{\delta_1}\rceil^{\frac{1}{p}}\Biggr\}^{\frac{q}{2}}
.\end{split}
\end{align}
\end{enumerate}
\end{theorem}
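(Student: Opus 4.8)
The plan is to derive all three assertions from It\^o's formula together with the path‑dependent stochastic Gronwall inequality \cref{a02} (and the Mehri--Scheutzow estimate underlying it) and the Burkholder--Davis--Gundy inequality, using throughout that $X^1$ is itself an It\^o process: for $k\in[0,n-1]\cap\Z$ and $s\in(t_k,t_{k+1}]$ one has $X^1_s=\xi_0+\int_0^s\mu(t_{\kappa(r)},\mathcal X^1)\,dr+\int_0^s\sigma(t_{\kappa(r)},\mathcal X^1)\,dW_r$, where $t_{\kappa(r)}$ is the largest grid point $\leq r$ and $\mathcal X^1$ is the piecewise‑linear interpolant of the grid values of $X^1$; in particular $\sup_{r\in[-\tau,s]}(a+\|\mathcal X^1_r\|^2)\leq\sup_{r\in[-\tau,s]}(a+\|X^1_r\|^2)$. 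I would also record the identity $X^1_s-\mathcal X^1_s=\sigma(t_k,\mathcal X^1)\bigl[(W_s-W_{t_k})-\tfrac{s-t_k}{t_{k+1}-t_k}(W_{t_{k+1}}-W_{t_k})\bigr]$ for $s\in(t_k,t_{k+1}]$, whose right‑hand side is of order $\size{\delta_1}^{1/2}$ in every $L^q$ and has vanishing $\F_{t_k}$‑conditional mean; this is the workhorse for the interpolation‑error bounds in part \ref{k18b}.

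For \ref{b06b} I would apply \cref{a02} with $V(x)=a+\|x\|^2$, $O=H$, its parameter $p$ taken equal to $q$ (so $q/2<q$), and $(X,a_\bullet,b_\bullet)$ equal to $(X^i,\mu(\cdot,X^i),\sigma(\cdot,X^i))$ for $i=0$ and to $(X^1,\mu(t_{\kappa(\cdot)},\mathcal X^1),\sigma(t_{\kappa(\cdot)},\mathcal X^1))$ for $i=1$. For this $V$ the Lyapunov expression \eqref{a01} is $2\langle X_s,a_s\rangle+\|b_s\|_{\HS}^2+(q-1)\|b_s^*X_s\|^2/(a+\|X_s\|^2)$; using $\|b_s^*X_s\|^2\leq\|b_s\|_{\HS}^2\|X_s\|^2\leq\|b_s\|_{\HS}^2(a+\|X_s\|^2)$, the growth bound \eqref{b01} and $2AB\leq A^2+B^2$, it is $\leq(1+c^2+(q+1)c)\sup_{r\in[-\tau,s]}(a+\|X_r\|^2)$, and splitting the supremum over $[-\tau,0]$ (where $X=\xi$) from the one over $[0,s]$ recasts this as $\alpha_s\sup_{r\in[0,s]}V(r,X_r)+\gamma_s$ with $\alpha_s$ of size $O(qc)$, $\gamma_s=\alpha_s(a+\sup_{r\in[-\tau,0]}\|\xi_r\|^2)$ and $\beta_s\equiv\lambda_s\equiv0$; then \cref{a02} yields \ref{b06b}. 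Part \ref{b12b} is then routine: $\sup_{\underline s,\overline s\in[\underline u,\overline u]}\|X^i_{\underline s}-X^i_{\overline s}\|\leq2\int_{\underline u}^{\overline u}\|a_r\|\,dr+2\sup_{s\in[\underline u,\overline u]}\|\int_{\underline u}^sb_r\,dW_r\|$, and one takes $L^q$‑norms, bounding the drift term by Minkowski's integral inequality and \eqref{b01}, the stochastic term by Burkholder--Davis--Gundy (its $O(\sqrt q)$‑constant absorbed in $O(q)$) and \eqref{b01}, and $\|\sup_{r\in[-\tau,\overline u]}(a+\|X^i_r\|^2)^{1/2}\|_{L^q}$ by \ref{b06b}.

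The core is \ref{k18b}. Set $Z:=X^1-X^0$, so $Z|_{[-\tau,0]}\equiv0$ and $dZ_s=(\mu(t_{\kappa(s)},\mathcal X^1)-\mu(s,X^0))\,ds+(\sigma(t_{\kappa(s)},\mathcal X^1)-\sigma(s,X^0))\,dW_s$. I would run the proof of \cref{a02} for $(\epsilon'+\|Z_s\|^2)^{p/2}$, $\epsilon'\downarrow0$ (apply It\^o's formula, then \cite[Theorem~2.2]{mehri2019stochastic}), with Gronwall exponent $p/2$ (legitimate since $p\geq2$ and $q/2<p/2$). With $V(x)=\|x\|^2$ the two diffusion contributions of the Lyapunov expression sum to at most $(p-1)\|b_s\|_{\HS}^2$; writing $b_s=(\sigma(s,\mathcal X^1)-\sigma(s,X^0))+(\sigma(t_{\kappa(s)},\mathcal X^1)-\sigma(s,\mathcal X^1))$ and using $\|u+v\|^2\leq(1+\epsilon)\|u\|^2+(1+\epsilon^{-1})\|v\|^2$, this is $\leq(p-1)(1+\epsilon)\|\sigma(s,\mathcal X^1)-\sigma(s,X^0)\|^2$ plus a $\sigma$‑temporal error. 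Decomposing the drift as $2\langle Z_s,\mu(s,\mathcal X^1)-\mu(s,X^0)\rangle+2\langle Z_s,\mu(t_{\kappa(s)},\mathcal X^1)-\mu(s,\mathcal X^1)\rangle$ and further splitting the first bracket's pairing as $2\langle\mathcal X^1_s-X^0_s,\mu(s,\mathcal X^1)-\mu(s,X^0)\rangle+2\langle X^1_s-\mathcal X^1_s,\mu(s,\mathcal X^1)-\mu(s,X^0)\rangle$, the term $2\langle\mathcal X^1_s-X^0_s,\mu(s,\mathcal X^1)-\mu(s,X^0)\rangle$ combines via \eqref{b10} (applied at time $s$ to $\mathcal X^1$ and $X^0$ frozen on $(s,T]$, invoking \eqref{b17}) with the $(p-1)(1+\epsilon)\|\sigma(s,\mathcal X^1)-\sigma(s,X^0)\|^2$ above to give $\leq c\sup_{r\in[-\tau,s]}\|\mathcal X^1_r-X^0_r\|^2\leq2c\sup_{r\in[0,s]}\|Z_r\|^2+O(\size{\delta_1})$, i.e.\ the $\alpha_s\sup_rV$ term plus an error. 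The remaining contributions are discretization errors: the temporal errors of $\mu,\sigma$ along $\mathcal X^1$, estimated via \eqref{b11} and \ref{b06b}--\ref{b12b} by $c[\size{\delta_1}+(\text{modulus of continuity at scale }\size{\delta_1})^2]$ times high moments of $a+\sup\|\xi\|^2$; and the interpolation discrepancy $2\langle X^1_s-\mathcal X^1_s,\mu(s,\mathcal X^1)-\mu(s,X^0)\rangle$, in which $X^1_s-\mathcal X^1_s$ is of order $\size{\delta_1}^{1/2}$ but $\mu(s,\mathcal X^1)-\mu(s,X^0)$ is only of order $1$ (no Lipschitz bound, just \eqref{b01} and \ref{b06b}). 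For this last term I would split $\mu(s,\mathcal X^1)-\mu(s,X^0)=[\mu(t_{\kappa(s)},\mathcal X^1)-\mu(t_{\kappa(s)},X^0)]+[\text{time shifts}]$, the first bracket being $\F_{t_{\kappa(s)}}$‑measurable and the second of order $\size{\delta_1}^{1/2}$ (so its pairing with $X^1_s-\mathcal X^1_s$ is $O(\size{\delta_1})$), and then exploit that the pairing of $X^1_s-\mathcal X^1_s$ with the $\F_{t_{\kappa(s)}}$‑measurable bracket is $\F_{t_{\kappa(s)}}$‑conditionally centred, so that its cell‑wise time‑averages assemble into a (local) martingale absorbed into the ``$M$'' of \cite[Theorem~2.2]{mehri2019stochastic}, only its $\F_{t_{\kappa(s)}}$‑conditional mean — of order $\size{\delta_1}$ — feeding the additive term. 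Collecting everything into $H_\infty=\int_0^T(\beta_s^{p/2}\lambda_s+\gamma_s^{p/2})\,ds$, bounding $\E[H_\infty^{q/p}]$ by Jensen's inequality (valid since $q/p<1$) together with \ref{b06b}--\ref{b12b}, and multiplying by the prefactor $\exp(\cdots)/((q/p)^{q/p+1}(1-q/p))$ and $T^{q/p}$ supplied by \cref{a02}, gives the stated bound; the factor $\size{\delta_1}\lceil T/\size{\delta_1}\rceil^{1/p}$ emerges from the interplay of the $\lceil T/\size{\delta_1}\rceil$ mesh cells with the sublinear exponent $q/p$ in the estimation of $H_\infty$.

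The main obstacle is exactly this interpolation discrepancy — the pairing of one‑step Euler increments with a \emph{non‑Lipschitz}, only $O(1)$ coefficient difference. A naive pointwise bound makes it $O(\size{\delta_1}^{1/2})$, which after the stochastic Gronwall step would give the sub‑optimal exponent $\tfrac14$ instead of $\tfrac12-\tfrac1{2p}$; the improvement forces one to use the conditional‑centering/martingale structure above, i.e.\ to re‑run the It\^o‑plus‑Mehri--Scheutzow argument with an extra local martingale carried in ``$M$'' rather than to invoke \cref{a02} verbatim. Keeping the many error contributions and their explicit constants organised, and checking that their combined $L^{q/2}$‑size has precisely the announced order, is the bulk of the remaining work.
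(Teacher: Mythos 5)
Your treatment of parts \ref{b06b} and \ref{b12b} matches the paper (apply \cref{a02} with $V(t,x)=a+\lVert x\rVert_H^2$, then the Burkholder--Davis--Gundy inequality), and your overall frame for \ref{k18b} (It\^o plus the path-dependent Gronwall lemma at exponent $p/2$, the $(1+\epsilon)$/$(1+\epsilon^{-1})$ split of the diffusion, temporal errors via \eqref{b11}) is also the paper's. However, the core drift estimate in your plan has a genuine gap, located exactly at the step you yourself flag as the main obstacle. Your decomposition creates the leftover term $2\langle X^1_s-\mathcal X^1_s,\mu(s,\mathcal X^1)-\mu(s,X^0)\rangle$, and your proposed cure --- exploit that $X^1_s-\mathcal X^1_s$ is conditionally centred given $\F_{\delta_1(s)}$ and absorb the centred part into the local martingale $M$ of \cite[Theorem 2.2]{mehri2019stochastic} --- cannot work as described. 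First, inside the Gronwall argument this term enters multiplied by the It\^o factor $\tfrac p2(\epsilon'+\lVert Z_s\rVert^2)^{\frac p2-1}$, which is not $\F_{\delta_1(s)}$-measurable, so the integrand is no longer conditionally centred. Second, even granting centring, the resulting process $t\mapsto\int_0^t(\cdots)\,ds$ is continuous and of finite variation, hence cannot be a non-constant local martingale; a Lebesgue-in-time integral can never be ``carried in $M$''. Repairing this would force a genuinely discrete argument (compare at grid points, use discrete martingale maximal inequalities, then interpolate), which you have not carried out. In addition, your claimed bound $c\sup_{r\in[-\fctDelay,s]}\lVert\mathcal X^1_r-X^0_r\rVert_H^2\le 2c\sup_{r\in[0,s]}\lVert Z_r\rVert_H^2+O(\size{\delta_1})$ is not available in the form \cref{a02} requires: for $s\in(t_k,t_{k+1})$ the interpolant $\mathcal X^1_s$ anticipates $X^1_{t_{k+1}}$, so the left-hand side involves $Z_{t_{k+1}}$ and increments of $X^0$ beyond time $s$; neither a running supremum up to a future time nor a non-adapted remainder can be fed into \eqref{a01} as $\alpha_s\sup_{r\in[0,s]}V+\gamma_s$ (the processes $\beta,\gamma$ in \cref{a02}, like $H$ in Mehri--Scheutzow, must be adapted). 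The same anticipation problem infects the other error terms you pair with $X^1_s-\mathcal X^1_s$.

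The paper removes both difficulties at once with a device absent from your proposal: the auxiliary paths $\mathcal X^{1,t}$ of \eqref{b19}, which coincide with $\mathcal X^1$ on $[-\fctDelay,\delta_1(t)]$, interpolate linearly from $X^1_{\delta_1(t)}$ to $X^1_t$ on $[\delta_1(t),t]$, and are frozen at $X^1_t$ afterwards. By \eqref{b17} this swap leaves $\mu(\delta_1(t),\cdot)$ and $\sigma(\delta_1(t),\cdot)$ unchanged; since $\mathcal X^{1,t}_t=X^1_t$, the monotonicity condition \eqref{b10} is applied directly to the pairing with $X^1_t-X^0_t$, so your problematic discrepancy term simply never appears; and since $\mathcal X^{1,t}$ is $\F_t$-measurable on $[-\fctDelay,t]$ and constant afterwards, the supremum on the right of \eqref{b10} becomes an adapted quantity which \eqref{b20} bounds by $\sup_r\lVert X^1_r-X^0_r\rVert_H$ plus the modulus of continuity of $X^0$ at scale $\size{\delta_1}$. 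The remaining temporal error is handled through \eqref{b11} together with the modulus estimates \eqref{b12c}--\eqref{b13}; in particular the factor $\lceil T/\size{\delta_1}\rceil^{1/p}$ (and hence the rate $\tfrac12-\tfrac1{2p}$) originates from the chaining over the $\lceil T/\size{\delta_1}\rceil$ mesh cells in \eqref{b13}, not, as you suggest, from the interplay of the cells with the exponent $q/p$, and no conditional-centring or extra martingale is needed anywhere. Without this (or an equivalent) construction, your sketch of \ref{k18b} does not go through.
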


\begin{proof}[Proof of \cref{a05}]By the fact that
$\xi $ is $\F_0$-measurable and by conditioning on $\F_0$ it suffices to assume that $\xi$ is deterministic. 
Throughout the rest of this proof let $\mathcal{X}^0\colon [0,T]\times \Omega\to H$ satisfy that
$\mathcal{X}^0= X^0$ and let $\delta_0,\delta_1\colon [0,T]\to\R$ satisfy for all $t\in[0,T]$ that 
\begin{align}\label{b02}
\delta_0(t)=t,\quad \size{\delta_0}=0,\quad 
\delta_1(t)= \1_{\{0\}}(t)+\sum_{i=1}^{n} \1_{(t_{i-1},t_i]}(t), \quad\text{and}\quad\size{\delta_1}=\sup_{i\in [1,n]\cap\Z}\lvert t_i-t_{i-1}\rvert.
\end{align}
First, \eqref{b05b} shows  for all  
$k\in [0,n-1]\cap\Z$,
$t\in(t_k,t_{k+1}]$  that $\delta_1(t)=t_k$ and hence a.s.\ it holds that
\begin{align}\begin{split}
X_{t}^1
&
=X_{t_k}^1
+  \mu(t_k,\mathcal{X}^{1})(t-t_k)
+  \sigma(t_k,\mathcal{X}^{1})(W_t-W_{t_k})\\
%&
%= X_{\delta(t)}^\delta
%+  \int_{\delta(t)}^{t}\mu({\delta}(t),X^{\delta})\,ds
%+  \int_{\delta(t)}^{t}\sigma({\delta}(t),X^{\delta})\,dW_s\\
&
= X_{t_k}^1
+  \int_{t_k}^{t}\mu({\delta_1}(s),\mathcal{X}^{1})\,ds
+  \int_{t_k}^{t}\sigma({\delta_1}(s),\mathcal{X}^{1})\,dW_s.
\end{split}\end{align}
This, \eqref{b05b},  induction,  \eqref{b09}, and \eqref{b02}
 show  for all  
$i\in\{0,1\}$, $r\in[-\fctDelay,0]$,
$t\in[0,T]$  that a.s.\ it holds that
\begin{align}
X_{r}^i
=
\xi_r
\quad\text{and}\quad
{X}^{i}_t= \xi_0
+  \int_{0}^{t}\mu({\delta}_i(s),\mathcal{X}^{i})\,ds
+  \int_{0}^{t}\sigma({\delta}_i(s),\mathcal{X}^{i})\,dW_s.\label{b05}
\end{align}
This, a standard property of affine linear interpolations,
 \eqref{b07},  and the fact  that $ \mathcal{X}^0=X^0 $
show for all 
$
i\in \{0,1\}$, $t\in[0,T] $ that a.s. it holds that
\begin{align}\label{b21}
\sup_{s\in[-\fctDelay,t]}\left\lVert \mathcal{X}_s^i\right\rVert_H\leq 
\sup_{s\in[-\fctDelay,t]}
\left\lVert {X}_s^i\right\rVert_H.
\end{align}
In addition, note  for all
$\mathfrak{b}\in \HS(U,H)$, $x\in H$ that
 \begin{align}\label{c05}\begin{split}
& \mathrm{trace}(bb^*)=\lVert \mathfrak{b}\rVert_{\HS(U,H)}^2
\quad\text{and}\\
&
\left\lVert \mathfrak{b}^* x\right\rVert_{U}^2
\leq \lVert \mathfrak{b}^*\rVert_{L(H,U)}^2\lVert x\rVert^2_H
\leq \lVert \mathfrak{b}^*\rVert_{\HS(H,U)}^2\lVert x\rVert^2_H
= \lVert \mathfrak{b}\rVert_{\HS(U,H)}^2\lVert x\rVert^2_H.
\end{split}\end{align} 
This, 
the Cauchy--Schwarz inequality,
\eqref{b01}, \eqref{b21},
 and
the fact that
$\forall\,i\in\{0,1\},r\in[-\fctDelay,0]\colon X^i_r=\xi_r$ (see \eqref{b05})
 show that for all $i\in\{0,1\}$, $t\in[0,T]$, $q\in [1,\infty)$ it holds a.s.\ that
\begin{align}\begin{split}
&
\left\langle
\mu({\delta}_i(t),\mathcal{X}^{i}),2 X^i_t\right\rangle_H+
\frac{1}{2}\mathrm{trace}
\left((\sigma\sigma^*)({\delta}_i(t),\mathcal{X}^{i})2\mathrm{Id}_H\right)
+\frac{q-1}{2}
\frac{\left\lVert 2\sigma^*({\delta}_i(t),\mathcal{X}^{i})X_t^i \right\rVert_U^2}{a+ \left\lVert
X_t^i
\right\rVert_H^2}\\
&\leq\tinynote{CS} \left\lVert
\mu({\delta}_i(t),\mathcal{X}^{i})\right\rVert_H 2\left\lVert X^i_t\right\rVert_H
+
\xeqref{c05}
\left\lVert
\sigma({\delta}_i(t),\mathcal{X}^{i})\right\rVert_{\HS(U,H)}^2+
2(q-1)\left\lVert
\sigma({\delta}_i(t),\mathcal{X}^{i})\right\rVert_{\HS(U,H)}^2\\
&= \left\lVert
\mu({\delta}_i(t),\mathcal{X}^{i})\right\rVert_H 2\left\lVert X^i_t\right\rVert_H+
(2 q -1)\left\lVert
\sigma({\delta}_i(t),\mathcal{X}^{i})\right\rVert_{\HS(U,H)}^2\\
&\leq\xeqref{b01} c\left(
\sup_{s\in\left[-\fctDelay,t\right]}\left[a+
\left\lVert
\mathcal{X}^{i}_{s}\right\rVert_H^2\right]^{\frac{1}{2}}\right)
2\left[a+
\left\lVert
{X}^{i}_{t}\right\rVert_H^2\right]^{\frac{1}{2}}
+(2q-1)\xeqref{b01}c
\sup_{s\in\left[-\fctDelay,t\right]}\left[a+
\left\lVert
\mathcal{X}^{i}_{s}\right\rVert_H^2\right]
\\
&\leq\xeqref{b21} 3qc
\sup_{s\in\left[-\fctDelay,t\right]}\left[a+
\left\lVert
X^{i}_{s}\right\rVert_H^2\right]
\leq\xeqref{b05} 3qc
\sup_{s\in\left[0,t\right]}\left[a+
\left\lVert
X^{i}_{s}\right\rVert_H^2\right]+ 3qc 
\sup_{r\in [-\tau,0]}\lVert\xi_r\rVert_H^2
.\end{split}
\end{align}
This,  \cref{a02}
(applied for every $q\in[1,\infty)$, $i\in\{0,1\}$ with  
$p\gets q$,
$O\gets H$,
$V\gets ([0,T]\times H\ni (t,x)\mapsto 
a+
\lVert x\rVert^2_H \in[0,\infty))$,
$ \alpha\gets ([0,T]\ni t\mapsto 3qc\in [0,\infty)) $,
$ \lambda\gets ([0,T]\ni t\mapsto 3qc\in [0,\infty))$, 
$X\gets (X^{i}_t)_{t\in[0,T]}$, 
$ a\gets (\mu({\delta}_i(s),\mathcal{X}^{i}))_{s\in[0,T]} $,
$ b\gets \left(\sigma({\delta}_i(s),\mathcal{X}^{i})\right)_{s\in[0,T]}$,
$ \beta\gets ( [0,T]\times\Omega\ni (t,\omega)\mapsto \sup_{r\in [-\tau,0]}\lVert\xi_r\rVert_H^2\in [0,\infty) )$,
$\gamma\gets ([0,T]\times\Omega\ni (t,\omega)\mapsto 0\in [0,\infty))$,
$q\gets 0.5q$
in the notation of \cref{a02}), the fact that
$\xi$ is deterministic,
the fact that
$q\cdot 3qc +(q-1)(3qc+1)=3q^2c+3q^2c-3qc+q-1\leq 6q^2c $,
the fact that
$ \frac{0.5q}{q}(1-\frac{0.5q}{q})^{\frac{q}{0.5q}}= 0.5^{2.5}$, 
the fact that
$(\frac{0.5q}{q})^{\frac{0.5q}{q}+1} (1-\frac{0.5q}{q})=0.5^{2.5}$,
the fact that
$\frac{1}{0.5^{2.5}}=2^{2.5}\leq 6$, and the fact that
$\sqrt{1+3Tqc}\leq \sqrt{e^{3Tq^2c}} = e^{1.5Tq^2c}$
 show for all   $q\in[1,\infty)$, $i\in\{0,1\}$ that
\begin{align}\begin{split}
&
\E\!\left[
\left(
\sup_{s\in [0,T]}\left[a+
\left\lVert
X^{i}_{s}\right\rVert_H^2\right]
\right)^{\frac{q}{2}}\right]\\
&\leq 
\tfrac{\exp \left(\frac{\int_{0}^{T} (q\cdot 3qc+(q-1)(3qc+1))\,ds}{\frac{0.5q}{q}(1-\frac{0.5q}{q})^{\frac{q}{0.5q}}}\right)}{(\frac{0.5q}{q})^{\frac{0.5q}{q}+1} (1-\frac{0.5q}{q})}
\E\!\left[
\left[
\left(
a+
\left\lVert
X^{i}_{0}\right\rVert_H^2\right)^{q}
+3T qc \left(\sup_{r\in [-\tau,0]}\lVert\xi_r\rVert_H^2\right)^{q}
\right]^{\frac{1}{2}}\right]\\
&\leq 6e^{6\cdot 6Tq^2c}
\sqrt{1+3Tqc}
\left(a+\sup_{r\in [-\tau,0]}\lVert\xi_r\rVert_H^2\right)^{\frac{q}{2}}
\leq 
6e^{38 Tq^2c}
\left(a+\sup_{r\in [-\tau,0]}\lVert\xi_r\rVert_H^2\right)^{\frac{q}{2}}.
\end{split}\label{b06}\end{align}
This shows 
  for all   $q\in[1,\infty)$, $i\in\{0,1\}$ that
\begin{align}
&
\E\!\left[
\left(
\sup_{s\in [-\fctDelay,T]}\left[a+
\left\lVert
X^{i}_{s}\right\rVert_H^2\right]
\right)^{\frac{q}{2}}\right]\leq 7 e^{38 Tq^2c}
\left(a+\sup_{r\in [-\tau,0]}\lVert\xi_r\rVert_H^2\right)^{\frac{q}{2}}.\label{k14}
\end{align}
This and the fact that $\xi$ is deterministic show \eqref{b06b}.

Next, \eqref{b05},  the triangle inequality, 
 the Burkholder-Davis-Gundy inequality (see, e.g., \cite[Lemma~7.7]{dz92}),
\eqref{b01},  \eqref{b21}, the fact that $c\geq 1$, 
the fact that
$\forall\, q\in [2,\infty)\colon 1+\sqrt{0.5q(q-1)}\leq \sqrt{2(1+0.5q(q-1))}= \sqrt{q^2-q+2}\leq q$,
 the fact that
$\forall\,s,t\in [0,T]\colon \lvert t-s\rvert\leq \sqrt{T\lvert t-s\rvert}\leq e^{0.5T}\sqrt{\lvert t-s\rvert}$,  
and
\eqref{k14} 
  show  for all 
$\underline{u}\in [0,T]$,
$\overline{u}\in[\underline{u},T] $,
 $i\in\{0,1\}$, $q\in [2,\infty)$  that
\begin{align}\begin{split}
&
\left\lVert
 \sup_{\underline{s},\overline{s}\in \left[\underline{u},\overline{u}\right]}
\left\lVert
X^i_{\underline{s}}-X^i_{\overline{s}}
\right\rVert_H
\right\rVert_{L^{q} (\P;\R) }\\
&
=\xeqref{b05}
\left\lVert
 \sup_{\underline{s},\overline{s}\in \left[\underline{u},\overline{u}\right]\colon \underline{s}\leq \overline{s}}
\left\lVert\int_{\underline{s}}^{\overline{s}}\mu({\delta}_i(r),\mathcal{X}^{i})\,dr+\int_{\underline{u}}^{s}\sigma({\delta}(r),\mathcal{X}^{i})\,dW_r\right\rVert_H\right\rVert_{L^{q} (\P;\R)}\\
&\leq
\int_{\underline{u}}^{\overline{u}}\left\lVert\left\lVert\mu({\delta}_i(r),\mathcal{X}^{i})
\right\rVert_H\right\rVert_{L^{q} (\P;\R)}\,dr
+\sqrt{\tfrac{q(q-1)}{2}}
\left[
\int_{\underline{u}}^{\overline{u}}\left\lVert\left\lVert \sigma({\delta}_i(r),\mathcal{X}^{i})\right\rVert_{\HS(U,H)}\right\rVert_{L^{q} (\P;\R)}^2\,dr\right]^{\frac{1}{2}}
\\
&\leq\xeqref{b01}\xeqref{b21} \int_{\underline{u}}^{\overline{u}}c
\left\lVert
\sup_{s\in[-\fctDelay,T]} \left[a+ \left\lVert X^i_s\right\rVert_H^2\right]^{\frac{1}{2}}\right\rVert_{L^{q} (\P;\R)}dr\\
&\qquad\qquad\qquad\qquad
+
\sqrt{\tfrac{q(q-1)}{2}}
\left[
\int_{\underline{u}}^{\overline{u}}c\left\lVert
\sup_{s\in[-\fctDelay,T]} \left[a+ \left\lVert X^i_s\right\rVert_H^2\right]^{\frac{1}{2}}
\right\rVert_{L^{q} (\P;\R)}^2\,dr\right]^{\frac{1}{2}}
\\
&\leq c\left(1+\sqrt{\tfrac{q(q-1)}{2}}\right)\max \left\{\lvert \overline{u}-\underline{u}\rvert,\lvert \overline{u}-\underline{u}\rvert^{\frac{1}{2}}\right\}\left(
\E\! \left[
\left(
\sup_{s\in[-\fctDelay,T]} 
\left[a+ \left\lVert X^i_s\right\rVert_H^2\right]\right)^{\frac{q}{2}}\right]\right)^{\frac{1}{q}}\\
&
\leq cq e^{0.5T}\lvert \overline{u}-\underline{u}\rvert^{\frac{1}{2}}
\xeqref{k14}
 7 e^{38 Tqc}\left(a+\sup_{r\in [-\tau,0]}\lVert\xi_r\rVert_H^2\right)^{\frac{1}{2}}\\
&
\leq 
 7cq e^{39 Tqc}
\left(a+\sup_{r\in [-\tau,0]}\lVert\xi_r\rVert_H^2\right)^{\frac{1}{2}}\lvert \overline{u}-\underline{u}\rvert^{\frac{1}{2}}
.
\label{b12}\end{split}
\end{align}
This and the fact that $\xi$ is deterministic show \eqref{b12b}. 

For the next step
for every 
$i\in\{0,1\}$,
$t\in[0,T]$ let $\mathcal{X}^{i,t}\colon[0,T]\times \Omega\to H $  have continuous sample paths and satisfy for all $s\in[0,T]$ that a.s.\ it holds that
\begin{align}\begin{split}&
\mathcal{X}^{1,t}_s= \1_{[-\fctDelay,\delta_1(t)]}(s)
\mathcal{X}^{1}_s+
\1_{(\delta_1(t),t)} (s)\left[
\frac{t-s}{t-\delta_1(t)}
\mathcal{X}^1_{\delta_1(t)}
+\frac{s-\delta_1(t)}{t-\delta_1(t)}
{X}^1_{t}
\right]+\1_{[t,T]}(s){X}^1_{t},
\\&\mathcal{X}^{0,t}=X^0.\label{b19}
\end{split}\end{align}
Then \eqref{b21} yields 
for all $t\in [0,T]$, $i\in\{0,1\}$
that a.s. it holds that
\begin{align}\label{b19b}
\sup_{s \in [-\fctDelay,t]} \lVert\mathcal{X}^{i,t}_s\rVert_H
\leq 
\sup_{s \in [-\fctDelay,t]} \lVert {X}^{i}_s\rVert_H .
\end{align}
Furthermore, \eqref{b19} and \eqref{b07} show that for all $k,\ell\in [0,n-1]\cap\Z $,
$\underline{s},\overline{s},t\in[0,T]$ with
$\overline{s}\in [t_{k},t_{k+1}]$,
$\underline{s}\in [t_{\ell},t_{\ell+1}]$, 
$\underline{s}\leq \overline{s}\leq t$, $\max\{t_{\ell},t_k\}< t$  it holds a.s.\ that
\begin{align}
\mathcal{X}^{1,t}_{\overline{s}}=\xeqref{b19}
 \xeqref{b07}
\frac{{\overline{s}}-t_{k}}{\min \{t,t_{k+1}\}-t_{k}}
{X}^{1}_{\min \{t,t_{k+1}\}}
+
\frac{\min \{t,t_{k+1}\}-{\overline{s}}}{\min \{t,t_{k+1}\}-t_{k}}
{X}^{1}_{t_{k}}
\end{align} and
\begin{align}
\mathcal{X}^{1,t}_{\underline{s}}
=\xeqref{b19}\xeqref{b07} \frac{\underline{s}-t_{\ell}}{\min\{t,t_{\ell+1}\}-t_{\ell}}
{X}^{1}_{\min\{t,t_{\ell+1}\}}
+ \frac{\min\{t,t_{\ell+1}\}-\underline{s}}{\min\{t,t_{\ell+1}\}-t_{\ell}} {X}^{1}_{t_{\ell}}.
\end{align} 
This and the triangle inequality show that
  for all $k,\ell\in [0,n-1]\cap\Z $,
$\overline{s}\in [t_{k},t_{k+1}]$,
$\underline{s}\in [t_{\ell},t_{\ell+1}]$, 
$\underline{u},\overline{u},t\in [0,T]$
 with $\underline{u}\leq \underline{s}\leq \overline{s}\leq \overline{u}\leq t$  it holds a.s.\ that
\begin{align}\begin{split}
&\left\lVert
\mathcal{X}^{1,t}_{\overline{s}}-
\mathcal{X}^{1,t}_{\underline{s}}\right\rVert_H
= \Biggl\lVert
\frac{\overline{s}-t_k}{\min\{t,t_{k+1}\}-t_k}
(
{X}^{1}_{\min\{t,t_{k+1}\}}-
{X}^{1}_{\min \{t,t_{\ell+1}\}})\\
&\qquad
+
\frac{\min\{t,t_{k+1}\}-\overline{s}}{\min\{t,t_{k+1}\}-t_k}(
{X}^{1}_{t_k}
-{X}^{1}_{\min \{t,t_{\ell+1}\}})
+\frac{\min \{t,t_{\ell+1}\}-\underline{s}}{\min \{t,t_{\ell+1}\}-t_\ell}
({X}^{1}_{\min \{t,t_{\ell+1}\}}-
{X}^{1}_{t_\ell})\Biggr\rVert_H
\\
%&\leq \left(\frac{\overline{s}-t_k}{\min\{t,t_{k+1}\}-t_k}+  
%\frac{\min\{t,t_{k+1}\}-\overline{s}}{\min\{t,t_{k+1}\}-t_k}+
%\frac{\min \{t,t_{\ell+1}\}-\underline{s}}{\min \{t,t_{\ell+1}\}-t_\ell}
%\right)
%\left[
%\sup_{r_1,r_2\in \left[\delta_1(
%\underline{u})
%,
%\min\{
%\overline{u}+\size{\delta_1},T\}\right]}
%\left\lVert X_{r_1}^1-X^1_{r_2}\right\rVert_H\right]
%\\
&\leq 2\left[
\sup_{r_1,r_2\in \left[\delta_1(
\underline{u})
,
\min\{
\overline{u}+\size{\delta_1},t\}\right]}
\left\lVert X_{r_1}^1-X^1_{r_2}\right\rVert_H\right].\\
\end{split}\end{align}
This, \eqref{b12}, and the triangle inequality 
 show  that for all 
$t, \underline{u},\overline{u}\in[0,T] $,
 $q\in [2,\infty)$ with 
$\underline{u}\leq \overline{u}\leq \delta_1(t)$ it holds that
\begin{align}\begin{split}
&
\left\lVert
 \sup_{\underline{s},\overline{s}\in \left[\underline{u},\overline{u}\right]}
\left\lVert
\mathcal{X}^{1,t}_{\overline{s}}-\mathcal{X}^{1,t}_{\underline{s}}
\right\rVert_H
\right\rVert_{L^{q} (\P;\R) }
\leq 14 cq e^{39 Tqc}
\left(a+\sup_{r\in [-\tau,0]}\lVert\xi_r\rVert_H^2\right)^{\frac{1}{2}}
\bigl[\lvert \overline{u}-\underline{u}\rvert+2\size{\delta_1}\bigr]^{\frac{1}{2}}
.
\end{split}\label{b08}
\end{align}
This, \eqref{b12}, \eqref{b19},
and the fact that
$\forall\, A,B\in [0,\infty)\colon \sqrt{A+B}\leq \sqrt{A}+\sqrt{B}$
 show that
for all 
$t, \underline{u},\overline{u}\in[0,T] $,
 $q\in [2,\infty)$, $i\in\{0,1\}$ with 
$\underline{u}\leq \overline{u}\leq t $ it holds that
\begin{align}
&\left\lVert
 \sup_{\underline{s},\overline{s}\in \left[\underline{u},\overline{u}\right]}
\left\lVert
\mathcal{X}^{i,t}_{\overline{s}}-\mathcal{X}^{i,t}_{\underline{s}}
\right\rVert_H
\right\rVert_{L^{q} (\P;\R) }\leq  14 cq e^{39 Tqc}
\left(a+\sup_{r\in [-\tau,0]}\lVert\xi_r\rVert_H^2\right)^{\frac{1}{2}}
\left[
\lvert
\overline{u}-\underline{u}\rvert^{\frac{1}{2}}+2\size{\delta_i}^{\frac{1}{2}}\right].\label{b12c}
\end{align}
This, the triangle inequality, and the fact that 
$\forall\, A,B\in [0,\infty),q\in [1,\infty)\colon (A+B)^q\leq 2^{q-1}(A^q+B^q)$ show for all $i\in\{0,1\}$ that
\begin{align}\begin{split}
&\left(\E\!\left[
\sup_{u,v\in[-\fctDelay,t]\colon \lvert u-v\rvert\leq \size{\delta_1}}
\left\lVert
\mathcal{X}^{i,t}_u
-\mathcal{X}^{i,t}_v
\right\rVert_H^q\right]\right)^{\frac{1}{q}}\\
&
\leq \left(2^{q-1}
\E\!\left[
\sum_{k=0}^{\lfloor T/\size{\delta_1}\rfloor}
\sup_{u,v\in \left[k\size{\delta_1},\min\{(k+1)\size{\delta_1},T\}\right]}
\left\lVert
\mathcal{X}^{i,t}_u
-\mathcal{X}^{i,t}_v
\right\rVert_H^q
\right]\right)^{\frac{1}{q}}\\
&\leq 2 \lceil T/\size{\delta_1}\rceil^{\frac{1}{q}}
14 cq e^{39 Tqc}
\left(a+\sup_{r\in [-\tau,0]}\lVert\xi_r\rVert_H^2\right)^{\frac{1}{2}}
3
\size{\delta_1}^{\frac{1}{2}}\\
&=
84 cq e^{39 Tqc}
\left(a+\sup_{r\in [-\tau,0]}\lVert\xi_r\rVert_H^2\right)^{\frac{1}{2}}
\size{\delta_1}^{\frac{1}{2}}
\lceil T/\size{\delta_1}\rceil^{\frac{1}{q}}
.
\end{split}\label{b13}\end{align}
Furthermore, \eqref{b07}, \eqref{b19},
 and the triangle inequality show for all 
$k\in\Z\cap[0,n-1]$,
$s\in(t_k,t_{k+1}]$, $t\in [s, T]$ that a.s.\ it holds that
\begin{align}
\mathcal{X}^{1,t}_s= \xeqref{b07}\xeqref{b19}
\frac{\min \{t,t_{k+1}\}-s}{\min \{t,t_{k+1}\}-t_k} X^1_{t_k}
+\frac{s-t_k}{\min \{t,t_{k+1}\}-t_k} X^1_{\min \{t,t_{k+1}\}}
\end{align} and hence
\begin{align}\begin{split}
&
\left\lVert\mathcal{X}^{1,t}_s-\mathcal{X}^{0,t}_s\right\rVert_H
 \\
&
=\xeqref{b07}\xeqref{b19}\Biggl\lVert
\frac{\min \{t,t_{k+1}\}-s}{\min \{t,t_{k+1}\}-t_k} (X^1_{t_k}-X_{t_k}^0)
+\frac{s-t_k}{\min \{t,t_{k+1}\}-t_k} (X^1_{\min \{t,t_{k+1}\}} -{X}_{\min \{t,t_{k+1}\}}^0) \\
&\qquad\qquad\qquad
+\frac{\min \{t,t_{k+1}\}-s}{\min \{t,t_{k+1}\}-t_k} (X_{t_k}^0-X_{s}^0)
+\frac{s-t_k}{\min \{t,t_{k+1}\}-t_k} ({X}_{\min \{t,t_{k+1}\}}^0-X_{s}^0)\Biggr\rVert_H \\
%&\leq 
%\left(
%\frac{\min \{t,t_{k+1}\}-s}{\min \{t,t_{k+1}\}-t_k}+ \frac{s-t_k}{\min \{t,t_{k+1}\}-t_k}\right)
%\left(\left[
%\sup_{r\in [-\fctDelay,t]}\left\lVert \mathcal{X}^1_r-{X}^0_r\right\rVert_H\right]
%+\left[\sup_{\underline{s},\overline{s}\in [t_k,\min \{t,t_{k+1}\}]}
%\left\lVert {X}^0_{\overline{s}}-{X}^0_{\underline{s}}\right\rVert_H\right]
%\right) \\
&\leq  \left[
\sup_{r\in [-\fctDelay,t]}\left\lVert {X}^1_r-{X}^0_r\right\rVert_H\right]
+\left[\sup_{\underline{s},\overline{s}\in [0,t]\colon \lvert \overline{s}-\underline{s}\rvert\leq \size{\delta_1}}
\left\lVert {X}^0_{\overline{s}}-{X}^0_{\underline{s}}\right\rVert_H\right].
\end{split}\label{b20}
\end{align}
For the next step let $\Gamma \colon [0,T]\times \Omega\to \R$ satisfy that for all
$t\in [0,T]$ it holds a.s.\ that
\begin{align}\begin{split}
\Gamma_{t}&=(c+\epsilon)\left[\sup_{\underline{s},\overline{s}\in [0,t]\colon \lvert \overline{s}-\underline{s}\rvert\leq \size{\delta_1}}
\left\lVert {X}^0_{\overline{s}}-{X}^0_{\underline{s}}\right\rVert_H^2\right]
\\
&\quad +\tfrac{\epsilon p-\epsilon+p}{\epsilon}
c\left[\size{\delta_1}+
\sup_{u,v\in[-\fctDelay,t]\colon \lvert u-v\rvert\leq \size{\delta_1}}
\left\lVert
\mathcal{X}^{1,t}_u
-\mathcal{X}^{1,t}_v
\right\rVert_H^2\right]\left[\sup_{s\in[-\fctDelay,t]}\left(a+\lVert {X}^{1}_s\rVert^2\right)^{\beta}\right].
\end{split}\label{b15}\end{align}
Then H\"older's inequality, the triangle inequality, \eqref{b13}, 
and
 \eqref{k14}
show for all
$t\in[0,T]$ that
\begin{align}\begin{split}
&
\left\lVert\Gamma_{t}\right\rVert_{L^{\frac{p}{2}}(\P;\R) }\leq (c+\epsilon)\left\lVert\sup_{\underline{s},\overline{s}\in [0,t]\colon \lvert \overline{s}-\underline{s}\rvert\leq \size{\delta_1}}
\left\lVert {X}^0_{\overline{s}}-{X}^0_{\underline{s}}\right\rVert_H^2\right\rVert_{L^{\frac{p}{2}}(\P;\R) }
\\
& +\tfrac{\epsilon p-\epsilon+p}{\epsilon}
c\left[\size{\delta_1}+\left\lVert
\sup_{u,v\in[-\fctDelay,t]\colon \lvert u-v\rvert\leq \size{\delta_1}}
\left\lVert
\mathcal{X}^{1,t}_u
-\mathcal{X}^{1,t}_v
\right\rVert_H^2\right\rVert_{L^{p}(\P;\R)}\right]
\left\lVert\sup_{s\in[-\fctDelay,t]}\left(a+\lVert {X}^{1}_s\rVert^2\right)^{\beta}\right\rVert_{L^{p}(\P;\R) } \\
&\leq (c+\epsilon)\left[84 c p e^{39 T pc}
\left(a+\sup_{r\in [-\tau,0]}\lVert\xi_r\rVert_H^2\right)^{\frac{1}{2}}
\size{\delta_1}^{\frac{1}{2}}
\lceil T/\size{\delta_1}\rceil^{\frac{1}{p}}\right]^{2}\\
&\quad +
\tfrac{\epsilon p-\epsilon+p}{\epsilon}c
\left(\size{\delta_1}+
\left[84 c \cdot  2p\cdot  e^{39 T \cdot 2p\cdot c}
\left(a+\sup_{r\in [-\tau,0]}\lVert\xi_r\rVert_H^2\right)^{\frac{1}{2}}
\size{\delta_1}^{\frac{1}{2}}
\lceil T/\size{\delta_1}\rceil^{\frac{1}{2p}}\right]^2
\right)\\
&\qquad\qquad\cdot 7 e^{38 T(2\beta p)^2c/p}
\left(a+\sup_{r\in [-\tau,0]}\lVert\xi_r\rVert_H^2\right)^{\beta}\\
&\leq 
\left[
c+\epsilon+\tfrac{\epsilon p-\epsilon+p}{\epsilon}c\right]
\left[85 c \cdot  2p\cdot  e^{39 T \cdot 2p\cdot c}
\left(a+\sup_{r\in [-\tau,0]}\lVert\xi_r\rVert_H^2\right)^{\frac{1}{2}}
\size{\delta_1}^{\frac{1}{2}}
\lceil T/\size{\delta_1}\rceil^{\frac{1}{2p}}\right]^2\\
&\qquad\qquad\cdot 7 e^{38 T(2\beta p)^2c/p}
\left(a+\sup_{r\in [-\tau,0]}\lVert\xi_r\rVert_H^2\right)^{\beta}\\
&\leq \left[
c+\epsilon+\tfrac{\epsilon p-\epsilon+p}{\epsilon}c\right]202300 c^2 p^2 e^{230 T p c \max\{\beta ^2,1\}}
\left(a+\sup_{r\in [-\tau,0]}\lVert\xi_r\rVert_H^2\right)^{\max\{\beta,1\}}
\size{\delta_1}
\lceil T/\size{\delta_1}\rceil^{\frac{1}{p}}.
\end{split}\label{b16}\end{align}
In addition, \eqref{c05} 
and the fact that $p\in[2,\infty)$
show for all $\mathfrak{a},x\in H$, $\mathfrak{b}\in \HS(U,H)$ that
\begin{align}
2
\left\langle
\mathfrak{a}, x
\right\rangle_H+
\tfrac{1}{2}\mathrm{trace}
\left(
\mathfrak{b} \mathfrak{b}^* 2\mathrm{Id}_H
\right)
+\tfrac{0.5p-1}{2}
\tfrac{\left\lVert 2 \mathfrak{b}^*x \right\rVert_U^2}{\left\lVert
x
\right\rVert_H^2}\leq 2
\left\langle
\mathfrak{a}, x
\right\rangle_H+
(p-1)
\lVert \mathfrak{b}\rVert_{\HS(U,H)}^2.
\label{k19}
\end{align} 
This, \eqref{b17}, \eqref{b19},
the fact that
$\forall\,A,B\in H\colon 2\langle A,B\rangle_H\leq \frac{1}{\epsilon}\lVert A\rVert_H^2+\epsilon\lVert B\rVert^2$, and
the fact that
$\forall\, A,B\in H\colon \lVert A+B\rVert^2_H\leq 
(1+\epsilon)\lVert A\rVert^2_H+
(1+\frac{1}{\epsilon})\lVert B\rVert^2_H
$
 show for all
$t\in[0,T]$ that a.s.\ it holds that
\begin{align}\begin{split}
&\left[2
\left\langle
\mathfrak{a}, x
\right\rangle_H+
\frac{1}{2}\mathrm{trace}
\left(
\mathfrak{b}\mathfrak{b}^* 2\mathrm{Id}_H
\right)
+\frac{0.5p-1}{2}
\frac{\left\lVert 2\mathfrak{b}^*x \right\rVert_U^2}{\left\lVert
x
\right\rVert_H^2}\right]\Bigr|_{
\substack{\mathfrak{a}=\mu({\delta_1}(t),\mathcal{X}^{1})-\mu(t,{X}^{0}),\\\mathfrak{b}=
\sigma({{\delta_1}}(t),\mathcal{X}^{1})
-\sigma(t,{X}^{0}),\, x=  X_t^{1}-X_t^{0}}
}  \\
&
\leq \xeqref{k19}
2
\left\langle
\mu({\delta_1}(t),\mathcal{X}^{1})-\mu(t,{X}^{0}),  X_t^{1}-X_t^{0}
\right\rangle_H+
(p-1)\left\lVert 
\sigma(\delta_1(t),\mathcal{X}^1)
-\sigma(t,\mathcal{X}^0)\right\rVert_{\HS(U,H)}^2  \\
&=\xeqref{b17}\xeqref{b19}
2
\left\langle
\mu({\delta_1}(t),\mathcal{X}^{1,t})-\mu(t,\mathcal{X}^{0,t}),  \mathcal{X}_t^{1,t}-\mathcal{X}_t^{0,t}
\right\rangle_H\\
&\quad +
(p-1)\left\lVert 
\sigma(\delta_1(t),\mathcal{X}^{1,t})
-\sigma(t,\mathcal{X}^{0,t})\right\rVert_{\HS(U,H)}^2  \\
&=2
\left\langle
\mu(t,\mathcal{X}^{1,t})-\mu(t,\mathcal{X}^{0,t}),  \mathcal{X}_t^{1,t}-\mathcal{X}_t^{0,t}
\right\rangle_H+2
\left\langle
\mu({\delta_1}(t),\mathcal{X}^{1,t})-
\mu(t,\mathcal{X}^{1,t}), \mathcal{X}_t^{1,t}-\mathcal{X}_t^{0,t}
\right\rangle_H \\
&\quad +
(p-1)
\left\lVert 
\sigma(\delta_1(t),\mathcal{X}^{1,t})
-\sigma(t,\mathcal{X}^{0,t})\right\rVert_{\HS(U,H)}^2 
\\
&\leq 2\left\langle
\mu(t,\mathcal{X}^{1,t})-\mu(t,\mathcal{X}^{0,t}),  \mathcal{X}^{1,t}_t-\mathcal{X}_t^{0,t}
\right\rangle_H \\
&\quad +
\tfrac{1}{\epsilon}
\left\lVert
\mu({\delta_1}(t),\mathcal{X}^{1,t})-
\mu(t,\mathcal{X}^{1,t})
\right\rVert_H^2 +\epsilon
\left\lVert  \mathcal{X}_t^{1,t}-\mathcal{X}_t^{0,t}\right\rVert_H^2 \\
&\quad +(p-1)
(1+\epsilon)
\left\lVert
\sigma(t,\mathcal{X}^{1,t})
-\sigma(t,\mathcal{X}^{0,t})\right\rVert_H^2\\
&\quad  
+(p-1)(1+\tfrac{1}{\epsilon})
\left\lVert
\sigma(\delta_1(t),\mathcal{X}^{1,t})-
\sigma(t,\mathcal{X}^{1,t})
\right\rVert_{\HS(U,H)}^2 \\
\end{split}
\end{align}
This, \eqref{b10}, \eqref{b11}, the fact that
$ \frac{1}{\epsilon}+(p-1)(1+\frac{1}{\epsilon})= \frac{\epsilon p-\epsilon+p}{\epsilon}$,  \eqref{b20},  \eqref{b19b}, and \eqref{b15} show for all
$t\in[0,T]$ that a.s.\ it holds that
\begin{align}\begin{split}
&\left[2
\left\langle
\mathfrak{a}, x
\right\rangle_H+
\frac{1}{2}\mathrm{trace}
\left(
\mathfrak{b}\mathfrak{b}^* 2\mathrm{Id}_H
\right)
+\frac{0.5p-1}{2}
\frac{\left\lVert 2\mathfrak{b}^*x \right\rVert_U^2}{\left\lVert
x
\right\rVert_H^2}\right]\Bigr|_{
\substack{\mathfrak{a}=\mu({\delta_1}(t),\mathcal{X}^{1})-\mu(t,{X}^{0}),\\\mathfrak{b}=
\sigma({{\delta_1}}(t),\mathcal{X}^{1})
-\sigma(t,{X}^{0}),\, x=  X_t^{1}-X_t^{0}}
}  \\
&\leq \xeqref{b10} \xeqref{b11}(c+\epsilon)\sup_{s\in[-\fctDelay,t]}
\left\lVert
\mathcal{X}^{1,t}_s-\mathcal{X}_s^{0,t}\right\rVert_H 
\\
&\quad +\tfrac{\epsilon p-\epsilon+p}{\epsilon}
c\left[\size{\delta_1}+
\sup_{u,v\in[-\fctDelay,t]\colon \lvert u-v\rvert\leq \size{\delta_1}}
\left\lVert
\mathcal{X}^{1,t}_u
-\mathcal{X}^{1,t}_v
\right\rVert_H^2\right]\left[\sup_{s\in[-\fctDelay,t]}\left(a+\lVert \mathcal{X}^{1,t}_s\rVert^2\right)^{\beta}\right]\\
&\leq  \xeqref{b20}\xeqref{b19b}(c+\epsilon)
\left[
\sup_{r\in [-\fctDelay,t]}\left\lVert {X}^1_r-{X}^0_r\right\rVert_H^2\right]
+(c+\epsilon)\left[\sup_{\underline{s},\overline{s}\in [0,t]\colon \lvert \overline{s}-\underline{s}\rvert\leq \size{\delta_1}}
\left\lVert {X}^0_{\overline{s}}-{X}^0_{\underline{s}}\right\rVert_H^2\right]
\\
&\quad +\tfrac{\epsilon p-\epsilon+p}{\epsilon}
c\left[\size{\delta_1}+
\sup_{u,v\in[-\fctDelay,t]\colon \lvert u-v\rvert\leq \size{\delta_1}}
\left\lVert
\mathcal{X}^{1,t}_u
-\mathcal{X}^{1,t}_v
\right\rVert_H^2\right]\left[\sup_{s\in[-\fctDelay,t]}\left(a+\lVert {X}^{1}_s\rVert^2\right)^{\beta}\right]\\
&= \xeqref{b15}(c+\epsilon)
\left[
\sup_{r\in [-\fctDelay,t]}\left\lVert {X}^1_r-{X}^0_r\right\rVert_H\right]+\Gamma_{t}.
\end{split}
\end{align}
This,  \eqref{b05}, \eqref{b09}, \cref{a02} (applied for every  $q\in [1,p)$ with 
$p\gets 0.5p$,
$O \gets H$,
$V\gets ( [0,T]\times H\ni (t,x)\mapsto \lVert x\rVert_H^2\in[0,\infty) ) $,
$ \alpha \gets ([0,T]\ni t\mapsto c+\epsilon\in [0,\infty))$,
$ \lambda \gets ([0,T]\ni t\mapsto 0\in [0,\infty))$,
$
 X\gets X^{1}-X^{0} $,
$a\gets \bigl(\mu({\delta_1}(s),\mathcal{X}^{1})-\mu(s,X^0)\bigr)_{s\in [0,T]}$,
$ b\gets \bigl(\sigma({\delta_1}(s),\mathcal{X}^{1})
-
\sigma(s,X^{0})\bigr)_{s\in[0,T]} $,
$ \beta\gets\bigl( [0,T]\times \Omega\ni (t,x)\mapsto 0\in [0,\infty)\bigr)$,
$\gamma\gets \Gamma$,
$ q\gets 0.5q $
in the notation of \cref{a02}), the fact that $c\geq 1$, Jensen's inequality,    Tonelli's theorem, and \eqref{b16}
show for all $q\in [1,p)$ that
\begin{align}\begin{split}
&
\E\!\left[\left(
\sup_{s\in[0,T]}
\left
\lVert
X^{1}_s-X^{0}_s
\right\rVert_H^2\right)^{\frac{q}{2}}\right]\leq
\frac{
\exp\! \left(\frac{\int_{0}^{T}
\bigl( 0.5p\cdot (c+\epsilon) +(0.5p-1)\bigr)ds}{\frac{q}{p}(1- \frac{q}{p})^{\frac{p}{q}}}\right)}{\left(\frac{q}{p}\right)^{\frac{q}{p}+1}(1-\frac{q}{p})}
\E\!\left[\left(\int_{0}^{T}\Gamma_s^{0.5p}\,ds\right)^{\frac{0.5q}{0.5p}}\right]\\
&\leq  
\frac{
\exp\! \left(\frac{ T p (c+\epsilon)}{\frac{q}{p}(1- \frac{q}{p})^{\frac{p}{q}}}\right)}{\left(\frac{q}{p}\right)^{\frac{q}{p}+1}(1-\frac{q}{p})}\left(
\int_{0}^{T}\E\!\left[\Gamma_s^{0.5p}\right]ds\right)^{\frac{0.5q}{0.5p}}
\leq \tfrac{
\exp\! \left(\frac{ T p (c+\epsilon)}{\frac{q}{p}(1- \frac{q}{p})^{\frac{p}{q}}}\right)}{\left(\frac{q}{p}\right)^{\frac{q}{p}+1}(1-\frac{q}{p})}
T^{\frac{q}{p}}\sup_{s\in [0,T]}
\left\lVert
\Gamma_s
\right\rVert_{L^{\frac{p}{2}}(\P;\R)}^{\frac{q}{2}}\\
&\leq 
\tfrac{
\exp\! \left(\frac{ T p (c+\epsilon)}{\frac{q}{p}(1- \frac{q}{p})^{\frac{p}{q}}}\right)
}{\left(\frac{q}{p}\right)^{\frac{q}{p}+1}(1-\frac{q}{p})}T^{\frac{q}{p}}
\xeqref{b16}\Biggl\{  \left[
c+\epsilon+\tfrac{\epsilon p-\epsilon+p}{\epsilon}c\right]202300 c^2 p^2 e^{230 T p c \max\{\beta ^2,1\}}\\
&\qquad\qquad\qquad\qquad\qquad\qquad\qquad
\left(a+\sup_{r\in [-\tau,0]}\lVert\xi_r\rVert_H^2\right)^{\max\{\beta,1\}}
\size{\delta_1}
\lceil T/\size{\delta_1}\rceil^{\frac{1}{p}}\Biggr\}^{\frac{q}{2}}
\label{k17}
.\end{split}
\end{align}
%\begin{align}
%\Biggl\{  \left[
%c+\epsilon+\tfrac{\epsilon p-\epsilon+p}{\epsilon}c\right]202300 c^2 p^2 e^{230 T p c \max\{\beta ^2,1\}}
%\size{\delta_1}
%\lceil T/\size{\delta_1}\rceil^{\frac{1}{p}}\Biggr\}^{\frac{q}{2}}
%\end{align}
%\begin{align}
%&
%\left(\E\!\left[
%\left(
%\sup_{s\in [-\fctDelay,T]}\left[a+
%\left\lVert
%X^{i}_{s}\right\rVert_H^2\right]
%\right)^{\frac{2\beta p}{2}}\right]\right)^{\frac{1}{p}}\leq 7 e^{38 T(2\beta p)^2c/p}
%\left(a+\sup_{r\in [-\tau,0]}\lVert\xi_r\rVert_H^2\right)^{\beta}.
%\end{align}
The proof of \cref{a05} is thus completed.
\end{proof}

%\subsubsection*{Acknowledgements}
% This work has been
%funded by the Deutsche Forschungsgemeinschaft (DFG, German Research Foundation) through
%the research grant HU1889/6-2.
%

{
\small
\bibliographystyle{acm}
\bibliography{../Bib/bibfile}
%\bibliography{bibfile}
}

\end{document}